\numberwithin{equation}{section}
\theoremstyle{theorem}
\newtheorem{theorem}{Theorem}[section]
\newtheorem*{theorem*}{Theorem}
\newtheorem{lemma}[theorem]{Lemma}
\newtheorem{problem}[theorem]{Problem}
\theoremstyle{definition}
\newtheorem*{example*}{Example}
\newtheorem*{conjecture*}{Conjecture}
\newtheorem{remark}[theorem]{Remark}
\newtheorem*{remark*}{Remark}
\newtheorem*{remarks*}{Remarks}
\patchcmd{\@settitle}{\uppercasenonmath}{\boldmath\uppercasenonmath}{}{}
\patchcmd{\section}{\scshape}{\bfseries\boldmath}{}{}
\patchcmd{\subsection}{\bfseries}{\bfseries\boldmath}{}{}
\renewcommand{\@secnumfont}{\bfseries}
\newcommand{\cX}{\mathcal{X}}
\newcommand{\cZ}{\mathcal{Z}}
\begin{document}

\title[Ramanujan's theta functions]
{Ramanujan's theta functions and internal congruences modulo arbitrary powers of $3$}

\author[S. Chern]{Shane Chern}
\address[S. Chern]{Department of Mathematics and Statistics, Dalhousie University,
Halifax, NS, B3H 4R2, Canada}
\email{chenxiaohang92@gmail.com}

\author[D. Tang]{Dazhao Tang}
\address[D. Tang]{School of Mathematical Sciences, Chongqing Normal University,
Chongqing 401331, P.R. China}
\email{dazhaotang@sina.com}

\date{}

\dedicatory{}

\subjclass[2010]{11P83, 05A17}

\keywords{Ramanujan's theta function, internal congruence,
$3$-adic analysis, integer partition}

\maketitle

\begin{abstract}
In this work, we investigate internal congruences modulo arbitrary powers of $3$
for two functions arising from Ramanujan's classical theta functions $\varphi(q)$
and $\psi(q)$. By letting
\begin{align*}
\sum_{n\ge 0} ph_3(n) q^n:=\dfrac{\varphi(-q^3)}{\varphi(-q)}\qquad\text{and}\qquad
\sum_{n\ge 0} ps_3(n) q^n:=\dfrac{\psi(q^3)}{\psi(q)},
\end{align*}
we prove that for any $m\ge 1$ and $n\ge 0$,
\begin{align*}
ph_3\big(3^{2m-1}n\big)\equiv ph_3\big(3^{2m+1}n\big)\pmod{3^{m+2}},
\end{align*}
and
\begin{align*}
ps_3{\left(3^{2m-1}n+\frac{3^{2m}-1}{4}\right)}\equiv
ps_3{\left(3^{2m+1}n+\frac{3^{2m+2}-1}{4}\right)}\pmod{3^{m+2}},
\end{align*}
thereby substantially generalizing the previous results of Bharadwaj et al.~and
Gireesh et al., respectively.
\end{abstract}

\section{Introduction}

The \emph{classical theta function} $f(a,b)$ was introduced by Ramanujan in his
\textit{Notebooks} \cite[p.~197, Entry 18]{Ram1957}:
\begin{align*}
f(a,b):=\sum_{k=-\infty}^\infty a^{k(k+1)/2}b^{k(k-1)/2},\qquad|ab|<1,
\end{align*}
and it also takes the product form by the \emph{Jacobi triple product identity}
\cite[p.~6, Eq. (0.15)]{Coo2017}:
\begin{align*}
f(a,b)=(-a,ab)_\infty (-b;ab)_\infty(ab;ab)_\infty,
\end{align*}
wherein the conventional \emph{$q$-Pochhammer symbol} is adopted:
\begin{align*}
(A;q)_\infty := \prod_{k\ge 0} (1-Aq^k).
\end{align*}
In particular, we are interested in the following two specializations:
\begin{alignat*}{4}
	\varphi(q) &\,:=\,\,&& \;f(q,q) &&\,=\,\,&& \sum_{k=-\infty}^\infty q^{k^2},\\
	\psi(q) &\,:=\,\,&& f(q,q^3) &&\,=\,\,&& \,\,\sum_{k\ge 0} q^{k(k+1)/2}.
\end{alignat*}
Alternatively,
\begin{align*}
\varphi(-q)=\frac{(q;q)_\infty^2}{(q^2;q^2)_\infty}\qquad\textrm{and}\qquad
\psi(q)=\frac{(q^2;q^2)_\infty^2}{(q;q)_\infty},
\end{align*}
where we choose $\varphi(-q)$ rather than $\varphi(q)$ in the former to
emphasize the similarity of the two product expressions.

A particularly notable topic in the world of $q$-series revolves around the
arithmetic properties of the coefficients $c(n)$ generated by
\begin{align*}
\sum_{n\ge 0} c(n)q^n := \prod_\delta (q^\delta;q^\delta)_\infty^{r_\delta}.
\end{align*}
Namely, we look for congruences of the form
\begin{align*}
	c(An+B) \equiv C \pmod{M},
\end{align*}
holding for any $n\ge 0$, in which the modulus $M$ and the parameters $A$, $B$
and $C$ are fixed, with $C$ usually being $0$.

The study of this problem was initiated by the celebrated congruences modulo
$5$, $7$ and $11$ due to Ramanujan \cite{Ram1919,Ram1921} for the
\emph{partition function} $p(n)$, which counts the number of integer partitions
of a natural number $n$ and has the generating function \cite{And1998}:
\begin{align*}
	\sum_{n\ge 0} p(n)q^n = \frac{1}{(q;q)_\infty}.
\end{align*}
Such congruences were later extended to moduli of an arbitrary power of $5$,
$7$ and $11$ as conjectured by Ramanujan \cite{Ram1988}: For
$\ell\in\{5,7,11\}$ and $\alpha\ge 1$,
\begin{align*}
p\big(\ell^\alpha n+\delta_{\alpha,\ell}\big)\equiv\begin{cases}
0 \pmod{\ell^\alpha} &\quad\ell=5,11,\\
0 \pmod{7^{\lceil\frac{\alpha+1}{2}\rceil}} &\quad\ell=7,\end{cases}
\end{align*}
with $0\le \delta_{\alpha,\ell} \le \ell^\alpha-1$ being such that
\begin{align*}
24\delta_{\alpha,\ell}\equiv1\pmod{\ell^\alpha}.
\end{align*}
Here Watson \cite{Wat1938} proved the cases of powers of $5$ and $7$, while
Atkin \cite{Atk1967} confirmed the case of powers of $11$.

In the meantime, we are also interested in \emph{internal} congruences of the form
\begin{align*}
c(An+B) \equiv c(A'n+B')\pmod{M},
\end{align*}
and we expect that the above two quantities are \emph{not} congruent to a fixed
number modulo $M$ for all $n$, so as to make this relation more nontrivial. In
many cases, the sequence $A'n+B'$ is rendered as a subsequence of $An+B$, and
such an internal congruence usually allows us to derive an infinite family of
congruences under the fixed modulus $M$. However, internal congruences modulo
an arbitrary power of a number are not widely recognized.

In this work, the first object of our interest is
\begin{align}\label{eq:ph-gf}
\sum_{n\ge 0}ph_3(n)q^n :=\dfrac{\varphi(-q^3)}{\varphi(-q)}.
\end{align}
From a partition-theoretic perspective, $ph_3(n)$ counts the number of
$3$-regular overpartitions of $n$ as introduced by Lovejoy \cite{Lov2003}, and
this function is usually written as $\overline{A}_3(n)$ in the literature. In
2018, Bharadwaj, Hemanthkumar and Naika \cite{BHN2018} established the
following internal congruences:
\begin{align*}
ph_3(27n) &\equiv ph_3(3n)\pmod{27},\\
ph_3(243n) &\equiv ph_3(27n)\pmod{81}.
\end{align*}
Herein, we offer a substantial generalization by extending the modulus to an
arbitrary power of $3$:

\begin{theorem}\label{th:ph-cong}
For any $m\ge 1$ and $n\ge 0$,
\begin{align}\label{eq:ph-cong}
ph_3\big(3^{2m+1}n\big)\equiv ph_3\big(3^{2m-1}n\big) \pmod{3^{m+2}}.
\end{align}
\end{theorem}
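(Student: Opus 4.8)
\emph{Proof proposal.}

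The plan is to recognise the sequences $\big(ph_3(3^kn)\big)_{n\ge 0}$ as the successive images of a single modular function under the operator $U=U_3$, $U\big(\sum_n a_nq^n\big):=\sum_n a_{3n}q^n$, and then to prove \eqref{eq:ph-cong} by induction on $m$. By the product formulas in the introduction,
\[
\sum_{n\ge 0}ph_3(n)\,q^n=\frac{(q^2;q^2)_\infty\,(q^3;q^3)_\infty^{\,2}}{(q;q)_\infty^{\,2}\,(q^6;q^6)_\infty}=:G(q),
\]
so $\sum_n ph_3(3^kn)q^n=U^kG$ and \eqref{eq:ph-cong} is the assertion that $D_m:=U^{2m+1}G-U^{2m-1}G\equiv 0\pmod{3^{m+2}}$ for all $m\ge 1$. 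What makes the induction run is the elementary identity $D_{m+1}=U^2D_m$ (immediate from linearity of $U$): it suffices to show that $U^2$ carries an element of the shape of $D_m$ — divisible by $3^{m+2}$ and suitably ``concentrated toward the cusp'' — to one divisible by $3^{m+3}$ and concentrated to the same degree, together with the base case $m=1$, which is the congruence $ph_3(27n)\equiv ph_3(3n)\pmod{27}$ of Bharadwaj, Hemanthkumar and Naika quoted above.

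To build the inductive engine I would fix an eta-quotient $t=t(q)=q+O(q^2)$ generating a genus-zero subfield of the function field to which $G$ and all its $U$-iterates belong — these are modular functions on some $\Gamma_0(N)$ with $6\mid N$ — together with an eta-quotient prefactor $\xi=\xi(q)$, possibly distinct prefactors $\xi_{\mathrm{odd}},\xi_{\mathrm{even}}$ exchanged by $U$ according to the parity of the number of iterations, arranged so that $G=\xi\sum_{i\ge i_0}g_it^i$ with all $g_i\in\mathbb Z$ and so that the $\mathbb Z$-module $\mathcal M:=\xi\cdot\mathbb Z[[t]]$ satisfies $U\mathcal M\subseteq\mathcal M$; then $U^{2m\pm1}G$ and $D_m$ all lie in $\mathcal M$. (If that function field is not rational one works over $\mathbb Z[[t]]$ with finitely many eta-quotient generators and runs the same argument with matrices in place of scalars.)

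The crux, and the step I expect to be the main obstacle, is a quantitatively sharp description of $U$ on $\mathcal M$: integers $c_{i,j}$ with
\[
U\big(\xi\,t^i\big)=\xi\sum_{j\ge 1}c_{i,j}\,t^j
\]
together with a lower bound on $v_3(c_{i,j})$ forcing $U$, after the $\xi$-twist, to be at once contracting in the degree of $t$ (it roughly divides that degree by $3$) and $3$-adically smoothing, in such a way that the double application in $D_{m+1}=U^2D_m$ together yields exactly one extra power of $3$. Establishing this requires: (i) the explicit $3$-dissections of $\xi$ and of the first several powers of $t$, proved by eta-quotient identities on the modular curves at hand — these give the low-index rows of the matrix $(c_{i,j})$ and seed an internal recursion for the remaining rows; (ii) a check that the $q$-expansions of $\xi^{\pm1}$ and of $t$ are themselves $3$-adically controlled, so that the possibly infinite tails of the expansions $U(\xi t^i)$ do not damage the estimate; and (iii) a final verification that, for the chosen $t$ and $\xi$, all $c_{i,j}$ are genuinely integral with the advertised $3$-adic valuations. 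Locating a compatible pair $(t,\xi)$ and proving the valuation bound is where essentially all of the work sits.

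Granting this lemma, the induction is bookkeeping. Writing $D_m=\xi\sum_i e^{(m)}_i t^i$, the inductive hypothesis takes the refined form $v_3\big(e^{(m)}_i\big)\ge m+2+\rho(i)$ for an explicit nondecreasing $\rho$ with $\rho(i)\to\infty$; applying $U^2$ and inserting the bound on $v_3(c_{i,j})$ — each unit of $t$-degree being moved by the contraction to a unit of the same $\rho$-weight while the smoothing supplies one fresh power of $3$ — returns the same estimate with $m$ replaced by $m+1$, and in particular $D_{m+1}\equiv 0\pmod{3^{m+3}}$. The base case $m=1$ is the Bharadwaj--Hemanthkumar--Naika congruence in its refined form, which drops out of the same dissection identities, and their second congruence $ph_3(243n)\equiv ph_3(27n)\pmod{81}$, the case $m=2$, provides a convenient independent check before the general step is invoked.
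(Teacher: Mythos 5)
Your proposal follows, in outline, the same route as the paper: the authors take the Hauptmodul-type function $\xi=\varphi(-q^9)/\varphi(-q)$ on $X_0(18)$ together with the twist $\gamma=F(q)/F(q^9)$ as your pair $(t,\xi)$, derive a three-term recurrence $U\big(\xi^i\big)=\big(\xi-3\xi^2+3\xi^3\big)\big(3U(\xi^{i-1})-3U(\xi^{i-2})+U(\xi^{i-3})\big)$ from Newton's identities, prove the valuation bound $\nu\big(X_i(d_i+j)\big)\ge\lfloor (j+1)/2\rfloor$ for the matrix entries (your $c_{i,j}$), and run exactly the induction you describe with weight $\rho(k)=\lfloor k/2\rfloor$ on $\widehat{\Phi}_m=\Phi_{2m+1}-\Phi_{2m-1}$. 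So the architecture is right. However, you have explicitly deferred the entire content of the central lemma (``where essentially all of the work sits''), so as written this is a plan rather than a proof.

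More importantly, there is a genuine gap in the step you treat as ``bookkeeping.'' Even granting the valuation bounds on the $c_{i,j}$, the inductive step does \emph{not} close by pure valuation counting: when one expands $\widehat{\Phi}_{m+1}=U\big(\gamma\, U(\widehat{\Phi}_m)\big)$, the contributions from the two lowest-degree coefficients $\widetilde{C}_m(0)$ and $\widetilde{C}_m(1)$ of $U(\widehat{\Phi}_m)$ land on $\xi^0$ and $\xi^2$ with combinations $\widetilde{C}_m(0)-2\widetilde{C}_m(1)$ and $3\widetilde{C}_m(0)-24\widetilde{C}_m(1)$, and the generic bound $\nu(\widetilde{C}_m(\ell))\ge m+2+\lfloor\ell/2\rfloor$ yields only $3^{m+2}$ for the first of these, one power of $3$ short of the required $3^{m+3}$. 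The paper rescues this by an extra arithmetic input: the constant term of $\widehat{\Phi}_{m+1}(q)$ vanishes identically because $ph_3(0)-ph_3(0)=0$, which forces $\widetilde{C}_m(0)-2\widetilde{C}_m(1)\equiv 0\pmod{3^{m+3}}$ and then also handles the $\xi^2$ coefficient. This is not a removable technicality: the authors point out in their final remark that the same argument for $ps_3$ fails precisely because the analogous constant term $ps_3\big(\tfrac{3^{2m+2}-1}{4}\big)-ps_3\big(\tfrac{3^{2m}-1}{4}\big)$ is not visibly zero, and they leave that case as an open problem. Your sketch, which asserts that the contraction plus smoothing ``together yields exactly one extra power of $3$'' uniformly in the degree, would therefore fail at the bottom of the expansion without this additional idea.
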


Similar to \eqref{eq:ph-gf}, we shall also consider
\begin{align}\label{eq:ps-gf}
	\sum_{n\ge 0} ps_3(n) q^n:= \frac{\psi(q^3)}{\psi(q)}.
\end{align}
The function $ps_3(n)$ is closely tied with the $\operatorname{pod}_3(n)$
function, which counts the number of partitions of $n$ into non-multiples of
$3$ in which the odd parts are distinct, through the relation
$ps_3(n)=(-1)^n\operatorname{pod}_3(n)$. According to the work of Gireesh,
Hirschhorn and Naika \cite{GHN2017}, the following internal congruences are
true:
\begin{align*}
ph_3(27n+20) &\equiv ph_3(3n+2)\pmod{27},\\
ph_3(243n+182) &\equiv ph_3(27n+20)\pmod{81}.
\end{align*}
In the same vein, the above two congruences will be extended in this work:

\begin{theorem}\label{th:ps-cong}
For any $m\ge 1$ and $n\ge 0$,
\begin{align}\label{eq:ps-cong}
ps_3{\left(3^{2m+1}n+\frac{3^{2m+2}-1}{4}\right)}
\equiv ps_3{\left(3^{2m-1}n+\frac{3^{2m}-1}{4}\right)}\pmod{3^{m+2}}.
\end{align}
\end{theorem}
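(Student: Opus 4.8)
The strategy parallels the treatment of Theorem \ref{th:ph-cong}, and indeed I expect the two theorems to be proved in tandem via the same $3$-adic machinery. The starting point is to understand the action of the relevant dissection operator on the generating function $\psi(q^3)/\psi(q)$. Writing $a=a(q):=\psi(q^3)/\psi(q)$ in terms of the standard eta-quotient-type building block, one uses the classical $3$-dissection of $\psi(q)$ (or equivalently of $(q;q)_\infty$) to express the series $\sum_{n\ge0}ps_3(3n+\tfrac{3^2-1}{4})q^n=\sum_{n\ge0}ps_3(3n+2)q^n$ as a rational function in a single Hauptmodul-type parameter, say $t=t(q)$, for the relevant congruence subgroup. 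The key structural fact to establish first is a \emph{functional equation}: the operator $U_3$ (extract every third coefficient, starting from the residue class $2$) sends the linear span of $\{1,t,t^2,\dots\}$ — or better, a cleverly chosen filtered family of functions $f_j$ — into itself, with explicit transition coefficients lying in $\mathbb{Z}$ and divisible by appropriate powers of $3$. Concretely, I would look for vectors $L(i)=\sum_j c_{i,j} f_j$ such that $U_3(f_i \cdot a) = \sum_j \pi_{i,j} L(j)$ where $\nu_3(\pi_{i,j})$ grows linearly in $j$; this is the heart of the matter.

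The second step is the iteration. Once the one-step relation is in place, define $\Phi_m(q):=\sum_{n\ge0}ps_3\!\left(3^{2m-1}n+\tfrac{3^{2m}-1}{4}\right)q^n$ and express $\Phi_{m+1}$ in terms of $\Phi_m$ by applying $U_3$ twice (the exponent jumps by $2$ because the odd-indexed dissections are, after the shift, trivial or merely permute the basis, while the even-indexed ones carry the arithmetic content — this is exactly why the congruence steps by $2$ in $m$). Expanding $\Phi_m$ in the basis $\{L(j)\}_{j\ge1}$ (note $L(0)$, the constant, is handled separately and accounts for the fact that the \emph{difference} rather than $\Phi_m$ itself is what becomes highly divisible), one gets $\Phi_m = \sum_{j\ge1} x_m(j) L(j)$ with a vector $\mathbf{x}_m$ satisfying a linear recurrence $\mathbf{x}_{m+1}=\mathbf{M}\,\mathbf{x}_m$ for an explicit matrix $\mathbf{M}$ with integer entries. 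The claim $ps_3(3^{2m+1}n+\tfrac{3^{2m+2}-1}{4})\equiv ps_3(3^{2m-1}n+\tfrac{3^{2m}-1}{4})\pmod{3^{m+2}}$ then reduces to showing $\nu_3\big(x_{m+1}(j)-x_m(j)\big)\ge m+2$ for all $j$, which in turn follows from a $3$-adic estimate $\nu_3(x_m(j))\ge \lambda j + \mu m + \nu$ for suitable constants, proved by induction on $m$ using the divisibility pattern of $\mathbf{M}$.

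The third step is the bookkeeping that converts the vector estimate into the stated congruence: one reads off $ps_3(3^{2m+1}n+\cdots)-ps_3(3^{2m-1}n+\cdots)$ as the coefficient of $q^n$ in $\Phi_{m+1}-\Phi_m=\sum_{j\ge1}(x_{m+1}(j)-x_m(j))L(j)$, and since each $L(j)$ has integer power-series coefficients, the $3$-adic lower bound on the scalars passes directly to the coefficients. One must check that the base case $m=1$ — which should recover (or slightly sharpen) the Gireesh–Hirschhorn–Naika congruences $ps_3(27n+\cdots)\equiv ps_3(3n+\cdots)\pmod{27}$ — holds, providing the induction anchor. Here the relation $ps_3(n)=(-1)^n\operatorname{pod}_3(n)$ may be invoked to cross-check against known $\operatorname{pod}_3$ results, but the proof itself runs entirely on the $\psi$-side.

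The main obstacle I anticipate is \textbf{step one}: identifying the right finite or filtered basis $\{f_j\}$ (equivalently, the correct Hauptmodul $t$ and the correct normalization of the $L(j)$) so that the $U_3$-images have \emph{exactly} the linear-in-$j$ $3$-adic valuation growth needed — too coarse a basis and the valuations do not grow fast enough to beat $3^{m+2}$; too fine and the transition matrix fails to be finite or its entries are not controlled. Getting the constants $\lambda,\mu,\nu$ sharp enough (in particular $\mu\ge 1$ with the right additive constant to land at $m+2$ rather than $m+1$ or $m+3$) will require a careful, possibly computer-assisted, analysis of the low-order transition coefficients, after which the induction is routine. I expect the $\varphi$-case (Theorem \ref{th:ph-cong}) and the $\psi$-case to share the same matrix $\mathbf{M}$ up to a change of variables, so the two theorems should fall out of one unified computation.
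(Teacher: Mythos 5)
Your overall framework --- express the dissected generating functions as polynomials in a Hauptmodul-type function, track a transition matrix under the $U$-operator, and close an induction via $3$-adic valuation bounds of the shape $\nu_3(x_m(j))\ge \lfloor j/2\rfloor+m+2$ --- is exactly the machinery the paper builds for Theorem \ref{th:ph-cong}. But there is a genuine gap in running it, as you propose, ``entirely on the $\psi$-side.'' In the inductive step the matrix estimate alone is not sharp enough: the contributions of the two lowest basis elements (the coefficients $\widetilde{C}_m(0)$ and $\widetilde{C}_m(1)$ in the paper's notation) each carry valuation only $\ge m+2$, while the constant term of the next difference series must reach valuation $m+3$. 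The paper supplies the missing power of $3$ by observing that the constant term of $\widehat{\Phi}_{m+1}$ vanishes identically, since $ph_3(3^{2m+3}\cdot 0)-ph_3(3^{2m+1}\cdot 0)=0$; this forces $\widetilde{C}_m(0)-2\widetilde{C}_m(1)\equiv 0\pmod{3^{m+3}}$, which is precisely the extra divisibility needed for the $\xi^0$ and $\xi^2$ terms. On the $\psi$-side the analogous constant term is $ps_3\big(\tfrac{3^{2m+4}-1}{4}\big)-ps_3\big(\tfrac{3^{2m+2}-1}{4}\big)$, which is \emph{not} zero, and whose divisibility by $3^{m+3}$ is essentially the $n=0$ case of the very congruence you are trying to prove; the paper explicitly flags this as an insuperable obstacle and leaves it as an open problem (Remark \ref{rmk:ps-to-ph}). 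So your induction, anchored purely on the $\psi$-side, does not close.

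The fix is the paper's actual route: prove Theorem \ref{th:ph-cong} first on the $\varphi$-side, where the constant-term argument works, and then observe that the auxiliary functions $\zeta=q\,\psi(q^9)/\psi(q)$ and $\delta=q^{-2}G(q)/G(q^9)$ satisfy modular equations with \emph{literally identical} integer coefficients to those of $\xi=\varphi(-q^9)/\varphi(-q)$ and $\gamma$. Consequently $\widehat{\Psi}_m$ has the same coefficient vector $C_m(k)$ as $\widehat{\Phi}_m$, and the $3$-adic bounds transfer verbatim, giving Theorem \ref{th:ps-cong} for free. Your closing remark that the two cases ``should fall out of one unified computation'' is the right instinct, but the unification must be anchored on the $\varphi$-side; the $\psi$-side cannot stand alone.
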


It is remarkable (and indeed unexpected!) that once we have proved
\eqref{eq:ph-cong} using the strategy in Sect.~\ref{sec:ph-cong-proof}, the
congruence \eqref{eq:ps-cong} follows automatically, as shown in
Sect.~\ref{sec:ps-cong-proof}. However, an interesting fact is that, in regard
to the opposite direction, there likely exists an insuperable obstacle, and we
will present a concrete discussion in Remark \ref{rmk:ps-to-ph}.

The remainder of this paper is organized as follows. First, in
Sect.~\ref{sec:phi-notation} we introduce required auxiliary series associated
with $\varphi(-q)$ and establish their corresponding modular equations. Then
Sect.~\ref{sec:3-adic} is devoted to an initial $3$-adic analysis. With such
preliminary knowledge, we are about to prove the internal congruences for
$ph_3(n)$ in Sect.~\ref{sec:ph-cong-proof}. Finally, in
Sect.~\ref{sec:ps-cong-proof} we show how the internal congruences for
$ps_3(n)$ will automatically hold by constructing a different set of auxiliary
functions involving $\psi(q)$, which, surprisingly, possess the same modular
equations.

\section{Auxiliary functions and modular equations}\label{sec:phi-notation}

Let us write
\begin{align}
F(q):=\dfrac{\varphi(-q^3)}{\varphi(-q)}.
\end{align}
In the meantime, we define the following two auxiliary functions:
\begin{align}\label{eq:gamma-def}
\gamma=\gamma(q):=\frac{F(q)}{F(q^9)},
\end{align}
and
\begin{align}\label{eq:xi-def}
\xi=\xi(q):=\dfrac{\varphi(-q^9)}{\varphi(-q)}.
\end{align}
Finally, let $U$ be the \emph{unitizing operator of degree three}, given by
\begin{align*}
U{\left(\sum_n a_n q^n\right)}:= \sum_n a_{3n} q^n.
\end{align*}

In this section, our main objective is the following:

\begin{theorem}\label{th:xi-poly}
For every $i\ge 0$, both $U\big(\xi^i\big)$ and $U\big(\gamma \xi^i\big)$ can be
expressed as a polynomial in $\mathbb{Z}[\xi]$. In particular, if we write
\begin{align}\label{eq:X-coeff}
U\big(\xi^i\big) = \sum_{j} X_{i}(j)\xi^j,
\end{align}
then $X_i(j)=0$ whenever $0\le j< \left\lceil \frac{i}{3}\right\rceil$, where the ceiling
function $\lceil x\rceil$ denotes the least integer greater than or equal to
$x$.
\end{theorem}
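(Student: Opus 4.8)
The plan is to establish a single master modular equation relating $\xi(q)$, $\gamma(q)$, and $\xi(q^3)$, and then to extract the desired properties of the operator $U$ by a dissection argument. First I would express everything in terms of the theta quotient. Since $\varphi(-q) = (q;q)_\infty^2/(q^2;q^2)_\infty$, the function $\xi = \varphi(-q^9)/\varphi(-q)$ is (up to routine eta-quotient manipulations) a modular function on a congruence subgroup of level dividing $18$, and likewise $\gamma = F(q)/F(q^9)$. The key structural fact I would aim for is an identity of the shape
\begin{align*}
\xi(q) = R\bigl(\xi(q^3),\ \gamma(q^3)\bigr)
\end{align*}
for an explicit rational (ideally polynomial) expression $R$, obtained from the classical modular equation of degree $3$ for $\varphi$. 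Equivalently, and more usefully for the $U$-operator, I would look for the degree-$3$ ``transformation'' expressing the three conjugates $\xi(\zeta^k q^{1/3})$ ($k=0,1,2$, $\zeta = e^{2\pi i/3}$) as the roots of a monic cubic with coefficients that are polynomials in $\xi(q)$ and $\gamma(q)$; this is the standard route by which $U(\xi^i)$ becomes a polynomial, since $U$ applied to a symmetric function of the conjugates returns that symmetric function evaluated at the cubic's coefficients.

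Concretely, the steps I would carry out are: (1) derive the degree-$3$ modular equation for $\varphi(-q)$ versus $\varphi(-q^3)$ — this is classical (it appears in Berndt's treatment of Ramanujan's notebooks and in Cooper's book) — and rewrite it purely in terms of $\xi$ and $\gamma$; (2) from this, obtain a polynomial relation $P(\xi(q), \xi(q^3), \gamma(q^3)) = 0$ that is monic of degree $3$ in $\xi(q)$, so that $\xi(q)$, $\xi(\zeta q)$, $\xi(\zeta^2 q)$ are exactly the three roots; (3) observe $U(\xi^i)(q^3) = \frac{1}{3}\sum_{k=0}^{2}\xi(\zeta^k q)^i$ is the $i$-th power sum of these roots, hence a polynomial with integer coefficients in the elementary symmetric functions, i.e. in $\xi(q^3)$ and $\gamma(q^3)$; (4) invoke Theorem analogous results (or a separate, simpler modular equation) to show $U(\gamma\,\xi^i)$ is also such a polynomial — here one writes $\gamma(q)$ as a polynomial in the conjugates times a symmetric correction, or more directly uses that $U$ of $\gamma \xi^i$ is $\frac13\sum_k \gamma(\zeta^k q)\xi(\zeta^k q)^i$ and that $\gamma(\zeta^k q)$ can be written rationally in the $\xi(\zeta^k q)$'s; (5) argue that the output lies in $\mathbb{Z}[\xi]$ rather than $\mathbb{Z}[\xi,\gamma]$, which should follow because $\gamma$ itself satisfies $U(\gamma^{\text{something}})$-type reductions, or because of degree/pole considerations forcing the $\gamma$-dependence to collapse. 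Finally, for the vanishing claim $X_i(j)=0$ for $j < \lceil i/3\rceil$, I would track $q$-orders: $\xi(q) = q^{-\text{(something)}}(1 + \cdots)$ has a definite order of vanishing at the relevant cusp, and since $U$ divides exponents by $3$, a term $\xi^i$ of order $\sim c i$ can only produce $\xi^j$ with $j$ at least $\lceil i/3\rceil$; one checks the leading behavior is nonzero to pin down the exact bound.

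The main obstacle I expect is step (2)–(3): getting the modular equation into a genuinely \emph{monic} polynomial form of degree exactly $3$ in $\xi(q)$ with coefficients in $\mathbb{Z}[\xi(q^3),\gamma(q^3)]$, and verifying that no denominators (powers of $\gamma$ or of $\xi$) intrude. Classical modular equations are often stated in a symmetric but non-monic way, or with the two moduli entangled, so massaging it into the ``Vieta-ready'' form where $U$ can be applied cleanly requires care — and one must separately confirm integrality of all coefficients, not merely rationality. A secondary subtlety is establishing that the $\gamma$-variable drops out entirely from $U(\xi^i)$ (the ``$\mathbb{Z}[\xi]$, not $\mathbb{Z}[\xi,\gamma]$'' assertion): this likely rests on an auxiliary identity expressing $\gamma$ (or a suitable combination) back in terms of $\xi$ after applying $U$, which I would prove by a short direct computation with the product forms once the degree-$3$ relation is in hand. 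The order-of-vanishing argument for the $X_i(j)$ bound, by contrast, I expect to be routine bookkeeping once the polynomial structure is available.
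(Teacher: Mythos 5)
Your overall skeleton --- realize the three conjugates $\xi(\omega^k q)$ as roots of a monic cubic, so that $U$ applied to $\xi^i$ becomes a power sum expressible in the elementary symmetric functions --- is exactly the mechanism the paper uses (Lemma \ref{le:cX-rec}). But the decisive steps are precisely the ones you flag as obstacles and leave open, and the paper closes them in a way you do not anticipate. First, you plan to obtain the cubic's coefficients by massaging the classical degree-$3$ modular equation for $\varphi$, landing in $\mathbb{Z}[\xi(q^3),\gamma(q^3)]$ and then arguing the $\gamma$-dependence ``collapses.'' As stated this is a genuine gap: you give no mechanism for the collapse, and ``degree/pole considerations'' will not by themselves eliminate a second generator. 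The paper avoids the issue entirely by running Newton's identities in the other direction: it first computes $U(\xi)$, $U(\xi^2)$, $U(\xi^3)$ explicitly as elements of $\mathbb{Z}[\xi]$ (a finite verification on the genus-zero curve $X_0(18)$, via cusp analysis or the Radu--Kolberg algorithm plus eta-quotient certification), and then $\sigma_1,\sigma_2,\sigma_3$ are integer polynomial combinations of these three power sums, hence automatically in $\mathbb{Z}[\xi]$ with no $\gamma$ ever appearing; monicity is free since the cubic is $\prod_k\bigl(X-\xi(\omega^k q)\bigr)$. The resulting three-term recurrence $\cX_i=(\xi-3\xi^2+3\xi^3)(3\cX_{i-1}-3\cX_{i-2}+\cX_{i-3})$ then settles both polynomiality and, by induction (the prefactor has minimal degree $1$ and the recurrence reaches back three steps), the vanishing bound $X_i(j)=0$ for $j<\lceil i/3\rceil$ --- no cusp-order bookkeeping needed.

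Second, your treatment of $U(\gamma\xi^i)$ is underdetermined. The entire content here is the one-line identity $\gamma(q)=\xi(q)/\xi(q^3)$, which follows immediately from the definitions \eqref{eq:gamma-def} and \eqref{eq:xi-def}; since $\xi(q^3)$ is invariant under $q\mapsto\omega q$, it passes through the symmetrization and gives $U\bigl(\gamma\xi^i\bigr)=\xi^{-1}U\bigl(\xi^{i+1}\bigr)$. You gesture at ``$\gamma(\omega^k q)$ written rationally in the $\xi(\omega^k q)$'s'' but do not produce this identity, and without it the claim is unproven. Note also a logical coupling your plan misses: $\xi^{-1}U\bigl(\xi^{i+1}\bigr)$ is a polynomial only because the constant term of $U\bigl(\xi^{i+1}\bigr)$ vanishes, which is exactly the $j=0$ case of the second assertion of the theorem applied to $i+1\ge 1$. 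So the two halves of the statement cannot be proved independently in the order you propose; the vanishing bound must be in hand (or at least the $j=0$ case of it) before polynomiality of $U(\gamma\xi^i)$ can be concluded.
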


We first focus on $U\big(\xi^i\big)$, and we shall start with the following
initial cases.

\begin{lemma}\label{le:xi-1-3}
We have
\begin{align}
U\big(\xi\big) &=\xi-3\xi^2+3\xi^3,\label{eq:xi1}\\
U\big(\xi^2\big) &=-2\xi+9\xi^2-24\xi^3+45\xi^4
-54\xi^5+27\xi^6,\label{eq:xi2}\\
U\big(\xi^3\big) &= \xi - 12 \xi^2 + 66 \xi^3 - 216 \xi^4 + 486 \xi^5 - 810 \xi^6\notag\\
&\quad + 972 \xi^7 - 729 \xi^8 + 243 \xi^9.\label{eq:xi3}
\end{align}
\end{lemma}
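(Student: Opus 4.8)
The plan is to compute $U(\xi)$, $U(\xi^2)$, $U(\xi^3)$ directly by reducing everything to a single modular quantity on which $U$ acts in a known way. Write $\xi(q) = \varphi(-q^9)/\varphi(-q)$ using the product form $\varphi(-q) = (q;q)_\infty^2/(q^2;q^2)_\infty$, so that
$$
\xi(q) = \frac{(q^9;q^9)_\infty^2 (q^2;q^2)_\infty}{(q^{18};q^{18})_\infty (q;q)_\infty^2}.
$$
The key structural fact is that $\xi$ is built from eta-quotients in the variables $q$ and $q^9$, and after multiplying through we may hope to express $\xi$, and hence its powers, as a power series in a uniformizing parameter $t$ for the relevant genus-zero modular curve (here the curve associated with $\Gamma_0(18)$ or a suitable subgroup), on which the level-lowering action of $U$ is classically computable. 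Concretely, I expect $\xi$ itself to be expressible rationally in terms of a Hauptmodul $t = t(q)$ whose cube-dissection $U(t^k)$ is a known polynomial identity; one natural candidate is $t = q\,(q^9;q^9)_\infty^3/\big((q;q)_\infty^{?}\cdots\big)$ arranged so that $U(t)$, $U(t^2)$, $U(t^3)$ have closed forms.

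The concrete steps I would carry out are: (i) express $\varphi(-q)$ via the $3$-dissection $\varphi(-q) = \varphi(-q^9) - 2q\,X(q^3) + \cdots$ — more usefully, use the standard $3$-dissection of $1/\varphi(-q)$ (equivalently of $(q;q)_\infty^{-2}(q^2;q^2)_\infty$) to write $\xi^i = \sum_{r=0}^{2} q^r A_{i,r}(q^3)$ with each $A_{i,r}$ a rational function of eta-quotients in $q^3$; (ii) apply $U$, which simply extracts the $r=0$ component and replaces $q^3$ by $q$, giving $U(\xi^i) = A_{i,0}(q)$; (iii) recognize $A_{i,0}(q)$ as an eta-quotient that, by comparison of leading behaviour and a finite number of coefficients, must coincide with the asserted polynomial in $\xi(q)$. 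Since $U(\xi^i)$ is claimed (and will be proved in Theorem \ref{th:xi-poly}) to be a polynomial in $\xi$ of degree $3i$ with $\lceil i/3\rceil$-fold vanishing at $q=0$, the polynomial is determined by $3i - \lceil i/3 \rceil + 1$ coefficients, so for $i \le 3$ it suffices to match finitely many $q$-expansion coefficients — a routine but finite check — after one has verified that the left side is indeed a polynomial in $\xi$ of the stated degree.

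Alternatively, and perhaps more cleanly, one can avoid dissection bookkeeping by working purely with $q$-expansions: compute the $q$-series of $\xi$ to enough terms, form $\xi^2$, $\xi^3$, apply $U$ termwise, and then solve the (triangular, hence uniquely solvable) linear system expressing $U(\xi^i)$ against $\xi, \xi^2, \ldots, \xi^{3i}$; finally promote the numerical identity to an exact one by a valence/Sturm-type bound, noting that both sides are holomorphic modular functions on the same congruence group with a pole only at a single cusp, so agreement of sufficiently many initial coefficients forces equality.

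The main obstacle I anticipate is \textbf{not} the algebra of matching coefficients, which is mechanical, but rather \emph{establishing a priori that $U(\xi^i)$ lies in $\mathbb{Z}[\xi]$ with controlled pole order} — i.e., identifying the correct modular group, checking that $\xi$ is a Hauptmodul (or generates the function field together with one auxiliary generator) and that the cusp behaviour of $U(\xi^i)$ forces polynomiality of bounded degree. Once that modular-theoretic scaffolding is in place for the general statement of Theorem \ref{th:xi-poly}, Lemma \ref{le:xi-1-3} reduces to three explicit finite computations; I would therefore expect the paper to prove a general dissection identity for $\xi$ (or for $1/\varphi(-q)$) first and then read off \eqref{eq:xi1}--\eqref{eq:xi3} as special cases, which is the route I would follow as well.
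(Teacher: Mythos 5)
Your proposal is correct and matches the paper's argument: the paper likewise proves these three identities by a direct cusp analysis, using the fact that $\xi$ is a modular function on the genus-zero curve $X_0(18)$ (or, alternatively, by automated verification via the Radu--Kolberg algorithm and eta-quotient certification), which is exactly the ``establish polynomiality and pole bounds, then match finitely many coefficients'' route you describe. Your only mild misprediction is that the paper does not derive the general $U(\xi^i)$ from a dissection identity but instead handles $i\ge 4$ later by a recurrence from Newton's identities; for the lemma itself your approach is the same.
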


\begin{proof}
These relations can be shown by a direct cusp analysis as we note that $\xi$
is a modular function on the classical modular curve $X_0(18)$, which has
genus $0$. Alternatively, we may perform a more automated proof by first
utilizing Smoot's \textsf{Mathematica} implementation \texttt{RaduRK}
\cite{Smo2021} of the Radu--Kolberg algorithm \cite{Rad2015} to express each
of $U\big(\xi\big)$, $U\big(\xi^2\big)$ and $U\big(\xi^3\big)$ in terms of a
Hauptmodul on $X_0(18)$, and then applying Garvan's \textsf{Maple} package
\texttt{ETA} \cite{Gar1999}, which allows us to certify the equality of two
linear combinations of eta-products. See the proof of
\cite[Theorem 2.3]{CS2023} for a detailed instance.
\end{proof}

Now we move on to general $U\big(\xi^i\big)$. For brevity, we write for $i\ge 0$,
\begin{align}
\cX_i := U\big(\xi^i\big).
\end{align}

\begin{lemma}\label{le:cX-rec}
For any $i\ge 3$,
\begin{align}\label{eq:cX-rec}
\cX_i=\big(\xi-3\xi^2+3\xi^3\big)\big(3\cX_{i-1}-3\cX_{i-2}+\cX_{i-3}\big).
\end{align}
\end{lemma}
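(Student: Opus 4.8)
The plan is to find a polynomial relation satisfied by $\xi$ and its image under the cubing substitution, and then apply the operator $U$ together with its basic multiplicativity property. Recall that for any formal power series $g$ we have $U\big(g(q)\, h(q^3)\big) = U\big(g(q)\big)\cdot h(q)$; in particular $U\big(\xi(q^3)^k\, \xi(q)^j\big) = \xi(q)^k\, U\big(\xi(q)^j\big) = \xi^k \cX_j$. So the key input I would establish first is a \emph{modular equation} of the form
\begin{align*}
\xi(q)^3 = a_2(\eta)\,\xi(q)^2 + a_1(\eta)\,\xi(q) + a_0(\eta),
\end{align*}
where $\eta := \xi(q^3)$ and each $a_k$ is a polynomial in $\eta$ with integer coefficients. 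That is, $\xi$ satisfies a monic cubic over $\mathbb{Z}[\eta]$ — this is exactly the kind of relation that the genus-zero curve $X_0(18)$ (where $\xi$ lives) forces between $\xi$ and its $q\mapsto q^3$ transform, and it can be certified by the same \texttt{ETA}/\texttt{RaduRK} machinery invoked in Lemma~\ref{le:xi-1-3}.

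Once that cubic is in hand, for $i\ge 3$ I would write $\xi^i = \xi^{i-3}\cdot \xi^3 = \xi^{i-3}\big(a_2(\eta)\xi^2 + a_1(\eta)\xi + a_0(\eta)\big)$, i.e. $\xi^i = a_2(\eta)\xi^{i-1} + a_1(\eta)\xi^{i-2} + a_0(\eta)\xi^{i-3}$. Applying $U$ and using the multiplicativity above yields
\begin{align*}
\cX_i = \widetilde{a}_2\,\cX_{i-1} + \widetilde{a}_1\,\cX_{i-2} + \widetilde{a}_0\,\cX_{i-3},
\end{align*}
where $\widetilde{a}_k = \widetilde{a}_k(\xi)$ is $a_k$ with $\eta$ replaced by $\xi$. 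Comparing with the claimed identity \eqref{eq:cX-rec}, what I actually need is that the cubic modular equation reads
\begin{align*}
\xi(q)^3 = 3\,\xi(q^3)\,\xi(q)^2 - 3\,\xi(q^3)\,\xi(q) + \xi(q^3),
\end{align*}
equivalently $\xi(q^3) = \xi(q)^3/\big(3\xi(q)^2 - 3\xi(q) + 1\big)$; note that $3\xi^2 - 3\xi + 1$ is precisely the reciprocal-type factor appearing (up to the factor $\xi$) on the right of \eqref{eq:cX-rec}, since $\xi - 3\xi^2 + 3\xi^3 = \xi(3\xi^2 - 3\xi + 1)$. A clean way to pin down the exact coefficients $3,-3,1$ is to combine the three identities of Lemma~\ref{le:xi-1-3}: from the ansatz $\xi^3 = a_2(\eta)\xi^2 + a_1(\eta)\xi + a_0(\eta)$ one gets, after applying $U$, that $\cX_3 = \widetilde a_2 \cX_2 + \widetilde a_1 \cX_1 + \widetilde a_0 \cX_0$ with $\cX_0 = U(1) = 1$; feeding in the explicit polynomials \eqref{eq:xi1}–\eqref{eq:xi3} and matching coefficients of powers of $\xi$ determines $\widetilde a_2, \widetilde a_1, \widetilde a_0$ uniquely, and one checks they are $3\xi, -3\xi, \xi$ — wait, more precisely the bookkeeping has to account for the leading factor, so I would instead verify directly that $\xi - 3\xi^2 + 3\xi^3$ divides each of $3\cX_2 - 3\cX_1 + \cX_0$-type combinations, or simply verify the degree-$3$ modular equation by eta-quotient certification and then deduce \eqref{eq:cX-rec} formally.

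The main obstacle, and essentially the only nontrivial point, is establishing the cubic modular equation for $\xi$ with the precise coefficients $(3,-3,1)$; everything after that is a purely formal manipulation with $U$. This should be routine via the cusp/valence-formula analysis on $X_0(18)$ or via Garvan's \texttt{ETA} package exactly as in Lemma~\ref{le:xi-1-3}, so I would dispatch it by the same citation to \cite{Gar1999,Smo2021,Rad2015} and the worked instance in \cite[Theorem 2.3]{CS2023}. One should also double-check the normalization of $U$ (degree-three unitizer) so that the multiplicativity $U(g(q)h(q^3)) = U(g)h$ is applied with the correct substitution $q^3 \leftrightarrow q$, but this is bookkeeping rather than a genuine difficulty.
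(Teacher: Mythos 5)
Your overall framework is sound and is in fact essentially the paper's: $\xi(q)$ satisfies a monic cubic over $\mathbb{Z}[\eta]$ with $\eta=\xi(q^3)$, one multiplies it by $\xi^{i-3}$, and applies $U$ using $U\big(g(q)h(q^3)\big)=U(g)\cdot h$. The gap is that the specific cubic you propose to certify, $\xi^3=3\eta\,\xi^2-3\eta\,\xi+\eta$, is false. A power-series check already kills it at order $q^3$: with $\xi=1+2q+4q^2+8q^3+\cdots$ and $\eta=1+2q^3+\cdots$ one finds $\xi^3=1+6q+24q^2+80q^3+\cdots$ while your right-hand side gives $1+6q+24q^2+74q^3+\cdots$. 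Moreover, your cubic would produce $\cX_i=\xi\big(3\cX_{i-1}-3\cX_{i-2}+\cX_{i-3}\big)$, whose $i=3$ instance contradicts Lemma \ref{le:xi-1-3} (it would give $\cX_3=\xi-9\xi^2+\cdots$ rather than $\xi-12\xi^2+\cdots$). You noticed the bookkeeping does not close up ("up to the factor $\xi$", "wait, more precisely\dots") but never resolved it; the fallback of "verify the modular equation by eta-quotient certification" would fail because the identity you would be certifying is not true. The correct cubic is
\begin{align*}
\xi^3=\big(3\eta-9\eta^2+9\eta^3\big)\xi^2-\big(3\eta-9\eta^2+9\eta^3\big)\xi+\big(\eta-3\eta^2+3\eta^3\big),
\end{align*}
and after applying $U$ and renaming $\eta\mapsto\xi$ the three coefficients become $3\sigma$, $-3\sigma$, $\sigma$ with $\sigma=\xi-3\xi^2+3\xi^3$, which is exactly the prefactor in \eqref{eq:cX-rec}.

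The missing idea that makes this painless is the paper's: no new modular equation needs to be certified at all. Take the conjugates $\xi_k=\xi(\omega^k q)$, $k=0,1,2$, with $\omega$ a primitive cube root of unity. Their elementary symmetric functions $\sigma_1,\sigma_2,\sigma_3$ are, by Newton's identities, polynomial combinations of the power sums $\xi_0^j+\xi_1^j+\xi_2^j$, which are exactly (up to the $q\mapsto q^3$ substitution and a factor of $3$) the quantities $U(\xi^j)$ for $j=1,2,3$ already computed in Lemma \ref{le:xi-1-3}. This yields $\sigma_1=\sigma_2=3\sigma_3$ and $\sigma_3=\eta-3\eta^2+3\eta^3$, hence the cubic above, and summing $\xi_k^i=\sigma_1\xi_k^{i-1}-\sigma_2\xi_k^{i-2}+\sigma_3\xi_k^{i-3}$ over $k$ gives \eqref{eq:cX-rec} directly. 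If you want to keep your $U$-based formulation instead, you must first prove existence of the cubic (e.g.\ via the conjugate construction) and then read off its coefficients correctly; as written, your determination of $(a_2,a_1,a_0)$ is the step that fails.
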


\begin{proof}
Let $\omega:=e^{\frac{2\pi i}{3}}$ be a primitive cubic root of unity and denote
for $k\in\{0,1,2\}$:
\begin{align*}
\xi_k:=\xi(\omega^k q).
\end{align*}
We start by noting that
\begin{align*}
	\sigma_1 &:= \xi_0 + \xi_1 + \xi_2\\
	&\ = 3U\big(\xi\big),\\
	\sigma_2 &:= \xi_0\xi_1 + \xi_1\xi_2 + \xi_2\xi_0\\
	&\ =\tfrac{1}{2}\big[\big(\xi_0 + \xi_1 + \xi_2\big)^2-\big(\xi_0^2 + \xi_1^2 + \xi_2^2\big)\big]\\
	&\ =\tfrac{1}{2}\big[9U\big(\xi\big)^2-3U\big(\xi^2\big)\big],\\
	\sigma_3 &:= \xi_0\xi_1\xi_2\\
	&\ = \tfrac{1}{6}\big[\big(\xi_0 + \xi_1 + \xi_2\big)^3-3\big(\xi_0 + \xi_1 + \xi_2\big)\big(\xi_0^2 + \xi_1^2 + \xi_2^2\big)+2\big(\xi_0^3 + \xi_1^3 + \xi_2^3\big)\big]\\
	&\ = \tfrac{1}{6}\big[27U\big(\xi\big)^3-27U\big(\xi\big)U\big(\xi^2\big)+6U\big(\xi^3\big)\big].
\end{align*}
These are instances of Newton's identities for elementary symmetric
functions \cite{Mea1992}. In light of Lemma \ref{le:xi-1-3}, it is clear
that all of $\sigma_1$, $\sigma_2$ and $\sigma_3$ are in
$\mathbb{Z}[\xi]$, and in particular,
\begin{align*}
\sigma_1 &=3\xi-9\xi^2+9\xi^3,\\
\sigma_2 &=3\xi-9\xi^2+9\xi^3,\\
\sigma_3 &=\xi-3\xi^2+3\xi^3.
\end{align*}
Next, we observe that $X=\xi_0$, $\xi_1$ and $\xi_2$ are the three roots of
\begin{align*}
\big(X-\xi_0\big)\big(X-\xi_1\big)\big(X-\xi_2\big)
=X^3-\sigma_1X^2+\sigma_2X-\sigma_3.
\end{align*}
So for $k\in\{0,1,2\}$,
\begin{align*}
\xi_k^3-\sigma_1\xi_k^2+\sigma_2\xi_k-\sigma_3=0,
\end{align*}
which further implies that for $i\ge 3$,
\begin{align*}
\xi_k^i=\sigma_1\xi_k^{i-1}-\sigma_2\xi_k^{i-2}+\sigma_3 \xi_k^{i-3}.
\end{align*}
Finally, we note that for $j\ge 0$,
\begin{align*}
\cX_j=U\big(\xi^j\big)=3\big(\xi(q)^j+\xi(\omega q)^j+\xi(\omega^2 q)^j\big)
=3\big(\xi_0^j+\xi_1^j+\xi_2^j\big).
\end{align*}
Hence, summing both sides of the previous relation over $k\in\{0,1,2\}$
yields the required result.
\end{proof}

Next, we establish a relation between $U\big(\gamma\xi^i\big)$ and
$U\big(\xi^{i+1}\big)$:

\begin{lemma}
For any $i\ge 0$,
\begin{align}\label{eq:U-gamma-cX}
U\big(\gamma \xi^i\big) = \xi^{-1} \cX_{i+1}.
\end{align}
\end{lemma}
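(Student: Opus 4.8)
The plan is to reduce the identity to two elementary facts: a telescoping factorization of $\xi$ through $F$, and the behaviour of the operator $U$ on a product of a series in $q$ with a series in $q^3$.

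First I would record that $\xi(q)=F(q)\,F(q^3)$. This is immediate from the definitions, since inserting $\varphi(-q^3)$ gives
\[
\xi(q)=\frac{\varphi(-q^9)}{\varphi(-q)}=\frac{\varphi(-q^3)}{\varphi(-q)}\cdot\frac{\varphi(-q^9)}{\varphi(-q^3)}=F(q)\,F(q^3).
\]
Replacing $q$ by $q^3$ yields $\xi(q^3)=F(q^3)\,F(q^9)$, and dividing the two relations collapses the common factor $F(q^3)$ to produce
\[
\gamma=\frac{F(q)}{F(q^9)}=\frac{\xi(q)}{\xi(q^3)}.
\]

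Next I would rewrite the left-hand side of \eqref{eq:U-gamma-cX}. Because $\varphi(-q)=1-2q+\cdots$ has constant term $1$, the quotient $\xi$ lies in $1+q\mathbb{Z}[[q]]$ and hence is invertible in $\mathbb{Z}[[q]]$; let $\xi^{-1}$ denote its reciprocal. The identity of the previous paragraph then gives
\[
\gamma\,\xi^i=\frac{\xi(q)^{i+1}}{\xi(q^3)}=\xi(q)^{i+1}\cdot\xi^{-1}(q^3),
\]
a product of a power series in $q$ with a power series in $q^3$. Now I would invoke the elementary fact that $U\big(h(q)\,g(q^3)\big)=g(q)\,U\big(h(q)\big)$ for all $h,g\in\mathbb{Z}[[q]]$, which follows at once by comparing the coefficient of $q^{3n}$ on both sides: if $h=\sum a_k q^k$ and $g=\sum b_\ell q^\ell$, then this coefficient equals $\sum_{j+\ell=n}a_{3j}b_\ell$ in either expression. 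Taking $h=\xi^{i+1}$ and $g=\xi^{-1}$ yields
\[
U\big(\gamma\,\xi^i\big)=\xi^{-1}\,U\big(\xi^{i+1}\big)=\xi^{-1}\,\cX_{i+1},
\]
which is \eqref{eq:U-gamma-cX}.

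There is no genuine obstacle in this argument; the closest things to subtleties are that $\xi$ is a unit in $\mathbb{Z}[[q]]$ (so that $\xi^{-1}$ and the intermediate rewriting make sense) and the $U$-identity for series in $q^3$, both of which are routine. The single idea that makes everything work is the telescoping factorization $\xi(q)=F(q)\,F(q^3)$, which turns the quotient defining $\gamma$ into the one-step shift $\xi(q)/\xi(q^3)$.
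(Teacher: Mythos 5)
Your proof is correct and follows essentially the same route as the paper: both reduce the claim to the identity $\gamma=\xi(q)/\xi(q^3)$ (the paper regroups the $\varphi$ factors directly, while you pass through the equivalent factorization $\xi(q)=F(q)F(q^3)$) and then apply the standard fact that $U$ pulls a power series in $q^3$ outside as a series in $q$. Your added remarks on the invertibility of $\xi$ in $\mathbb{Z}[[q]]$ and the coefficient-level verification of the $U$-identity merely make explicit what the paper leaves implicit.
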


\begin{proof}
In light of \eqref{eq:gamma-def} and \eqref{eq:xi-def},
\begin{align*}
\gamma=\dfrac{F(q)}{F(q^9)}=\dfrac{\varphi(-q^3)}{\varphi(-q)}
\dfrac{\varphi(-q^9)}{\varphi(-q^{27})}
=\dfrac{\varphi(-q^3)}{\varphi(-q^{27})}
\cdot\dfrac{\varphi(-q^9)}{\varphi(-q)}=\dfrac{\xi(q)}{\xi(q^3)}.
\end{align*}
Thus,
\begin{align*}
U\big(\gamma\xi^i\big)=U{\left(\dfrac{\xi(q)}{\xi(q^3)}\cdot\xi(q)^i\right)}
=\dfrac{1}{\xi(q)}\cdot U\big(\xi(q)^{i+1}\big),
	\end{align*}
	as required.
\end{proof}

Finally, we conclude our proof of Theorem \ref{th:xi-poly}.

\begin{proof}[Proof of Theorem \ref{th:xi-poly}]
It is plain that $\cX_0=1\in \mathbb{Z}[\xi]$. When $i\ge 1$, every $\cX_i$
can be expressed as a polynomial in $\mathbb{Z}[\xi]$ according to the
recurrence \eqref{eq:cX-rec} with the initial cases in Lemma \ref{le:xi-1-3}
kept in mind. In addition, by an inductive argument, it is also clear from
the recurrence \eqref{eq:cX-rec} that the minimal $\xi$-power in $\cX_i$ is
at least $\xi^{\lceil \frac{i}{3}\rceil}$, which, particularly, means that whenever
$i\ge 1$, the constant term in the $\xi$-polynomial representation of
$\cX_i$ vanishes. This fact, together with \eqref{eq:U-gamma-cX}, further
certifies that $U\big(\gamma \xi^{i-1}\big)\in \mathbb{Z}[\xi]$.
\end{proof}

\section{$3$-Adic analysis}\label{sec:3-adic}

Throughout, let $\nu(n)$ denote the \emph{$3$-adic evaluation} of $n$, which
is defined as the \emph{largest} nonnegative integer $\alpha$ such that
$3^\alpha\mid n$. Also, as a convention, we assume that $\nu(0)=\infty$.

Recall from Theorem \ref{th:xi-poly} that, as a polynomial in
$\mathbb{Z}[\xi]$, the $\xi$-powers in $\cX_i$ of degree lower than
\begin{align}
	d_i := \left\lceil \frac{i}{3}\right\rceil
\end{align}
all vanish. Now we perform the following $3$-adic analysis for the remaining
terms:

\begin{lemma}\label{le:3-adic-X}
Let the coefficients $X_i$ be as in \eqref{eq:X-coeff}. Then for any
$i\ge 1$,
\begin{align}\label{eq:3-adic-0}
\nu\big(X_i(d_i)\big)=0,
\end{align}
and further for any $j\ge 1$,
\begin{align}\label{eq:3-adic-j}
\nu\big(X_i(d_i+j)\big)\ge\left\lfloor\frac{j+1}{2}\right\rfloor.
\end{align}
\end{lemma}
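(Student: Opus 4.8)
The plan is to prove both estimates simultaneously by induction on $i$, using the recurrence $\cX_i=(\xi-3\xi^2+3\xi^3)(3\cX_{i-1}-3\cX_{i-2}+\cX_{i-3})$ from Lemma \ref{le:cX-rec}. Before doing so, I would record a cleaner bookkeeping device: for a polynomial $P=\sum_j c_j\xi^j\in\mathbb{Z}[\xi]$ whose lowest-degree term is $\xi^{d}$, write $P\in\mathcal{P}_d$ if $\nu(c_d)=0$ and $\nu(c_{d+j})\ge\lfloor(j+1)/2\rfloor$ for all $j\ge 1$. The claim is then precisely that $\cX_i\in\mathcal{P}_{d_i}$ for every $i\ge 1$. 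The base cases $i=1,2,3$ are checked directly against Lemma \ref{le:xi-1-3}: for instance $U(\xi)=\xi-3\xi^2+3\xi^3$ has $d_1=1$, leading coefficient $1$, and the tail coefficients $-3,3$ satisfy $\nu\ge\lfloor 2/2\rfloor=1$ and $\nu\ge\lfloor 3/2\rfloor=1$; similarly for $U(\xi^2)$ with $d_2=1$ and $U(\xi^3)$ with $d_3=1$.

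For the inductive step with $i\ge 4$, I would analyze the two factors on the right of \eqref{eq:cX-rec} separately. The first factor $\mu:=\xi-3\xi^2+3\xi^3$ lies in $\mathcal{P}_1$. For the second factor, set $R_i:=3\cX_{i-1}-3\cX_{i-2}+\cX_{i-3}$. By the induction hypothesis, $\cX_{i-1}\in\mathcal{P}_{d_{i-1}}$, $\cX_{i-2}\in\mathcal{P}_{d_{i-2}}$, $\cX_{i-3}\in\mathcal{P}_{d_{i-3}}$; note $d_{i-3}=d_i-1$ while $d_{i-1},d_{i-2}\in\{d_i-1,d_i\}$ depending on $i\bmod 3$. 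The key arithmetic point is that multiplying $\cX_{i-1}$ and $\cX_{i-2}$ by $3$ \emph{raises} the $\nu$-bound on every coefficient by $1$, which exactly compensates for the fact that those polynomials may start one degree higher than $\cX_{i-3}$; a short case check on $i\bmod 3$ shows that $R_i$ has lowest-degree term $\xi^{d_i-1}$ with coefficient of $3$-adic valuation exactly $0$ (it comes from the leading term of $\cX_{i-3}$, the other contributions at that degree being divisible by $3$), and that $R_i$ satisfies the tail bound of $\mathcal{P}_{d_i-1}$ — here the gain from the factor $3$ on $\cX_{i-1},\cX_{i-2}$ ensures their contributions to the coefficient of $\xi^{d_i-1+j}$ have valuation $\ge 1+\lfloor(j)/2\rfloor\ge\lfloor(j+1)/2\rfloor$, matching $\cX_{i-3}$'s own bound. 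Thus $R_i\in\mathcal{P}_{d_i-1}$.

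It then remains to show that $\mathcal{P}_1\cdot\mathcal{P}_{d_i-1}\subseteq\mathcal{P}_{d_i}$, i.e.\ that the product $\mu R_i$ again satisfies the valuation profile, with lowest degree $1+(d_i-1)=d_i$. For the leading coefficient this is immediate: $\nu$ of the product of the two leading coefficients is $0+0=0$. For the tail, write $\mu=\sum_{a\ge 1}u_a\xi^a$ with $\nu(u_a)\ge\lfloor(a)/2\rfloor$ for $a\ge 2$ and $u_1=1$, and $R_i=\sum_{b\ge 0}r_b\xi^{d_i-1+b}$ with $\nu(r_b)\ge\lfloor(b+1)/2\rfloor$ for $b\ge 1$; the coefficient of $\xi^{d_i+j}$ in the product is $\sum_{a-1+b=j}u_a r_b$, and one checks term by term that $\nu(u_a r_b)\ge \lfloor(a-1)/2\rfloor+\lfloor(b+1)/2\rfloor\ge\lfloor(j+1)/2\rfloor$ using $(a-1)+( b+1)=j+1$ and superadditivity of $\lfloor\cdot/2\rfloor$ up to an error of $1$ that is absorbed because at least one of the two summands contributes its full floor when the other is the ``exact'' term. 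Assembling these three facts gives $\cX_i\in\mathcal{P}_{d_i}$, completing the induction. I expect the main obstacle to be the careful case analysis modulo $3$ in the middle step: tracking exactly which of $d_{i-1},d_{i-2},d_{i-3}$ equal $d_i$ versus $d_i-1$, and verifying that in every residue class the factor $3$ in front of $\cX_{i-1},\cX_{i-2}$ supplies precisely the valuation needed so that no coefficient of $R_i$ (and hence of $\cX_i$) falls below the stated bound — in particular confirming that the degree-$(d_i-1)$ coefficient of $R_i$ is a unit, not merely nonzero, which is what propagates \eqref{eq:3-adic-0}.
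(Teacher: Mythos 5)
Your proposal is correct and follows essentially the same route as the paper: induction on $i$ via the recurrence \eqref{eq:cX-rec}, with the base cases read off from Lemma \ref{le:xi-1-3}, the unit leading coefficient propagated through $\cX_{i-3}$ (the other two terms at that degree being killed or made divisible by $3$ by the explicit factor $3$), and the tail bound $\nu\ge\lfloor(j+1)/2\rfloor$ verified coefficientwise; the parity check in your product step does close correctly, since the potential loss of $1$ in $\lfloor a/2\rfloor+\lfloor(b+1)/2\rfloor$ occurs only when $j=a-1+b$ is even, where $\lfloor(j+1)/2\rfloor=j/2$ still holds. The only difference from the paper is presentational: you package the valuation profile into a class $\mathcal{P}_d$ and split the step into ``$R_i\in\mathcal{P}_{d_i-1}$'' plus a product lemma, whereas the paper expands all nine products directly for $i=3I+4$ and takes a minimum.
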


\begin{proof}
In light of Lemma \ref{le:xi-1-3}, our statement holds for $i=1$, $2$ and
$3$. Now let us assume that the statement is true for $i=3I+1$, $3I+2$ and
$3I+3$ with a certain $I\ge 0$. We first certify the claim for $i=3I+4$. To
see this, we start with \eqref{eq:cX-rec}:
\begin{align*}
\cX_{3I+4}=\big(\xi-3\xi^2+3\xi^3\big)
\big(3\cX_{3I+3}-3\cX_{3I+2}+\cX_{3I+1}\big).
\end{align*}
Noting that
\begin{align*}
d_{3I+4} = I+2,
\end{align*}
and that
\begin{align*}
d_{3I+1}=d_{3I+2}=d_{3I+3}=I+1,
\end{align*}
we have
\begin{align*}
X_{3I+4}(d_{3I+4})=3X_{3I+3}(d_{3I+3})-3X_{3I+2}(d_{3I+2})
+X_{3I+1}(d_{3I+1}),
\end{align*}
which is not a multiple of $3$ since $X_{3I+1}(d_{3I+1})$ is not divisible
by $3$ as assumed. Thus, \eqref{eq:3-adic-0} holds for $i=3I+4$. Similarly,
\begin{align*}
X_{3I+4}(d_{3I+4}+1) &=3X_{3I+3}(d_{3I+3}+1)-3X_{3I+2}(d_{3I+2}+1)
+X_{3I+1}(d_{3I+1}+1)\\
 &\quad-9X_{3I+3}(d_{3I+3})+9X_{3I+2}(d_{3I+2})-3X_{3I+1}(d_{3I+1}).
\end{align*}
Since it is already assumed that
$\nu\big(X_{3I+1}(d_{3I+1}+1)\big)\ge \lfloor\frac{1+1}{2}\rfloor=1$, we
conclude that \eqref{eq:3-adic-j} holds for $j=1$. Finally, for $j\ge 2$,
\begin{align*}
	&X_{3I+4}(d_{3I+4}+j)\\
	&=3X_{3I+3}(d_{3I+3}+j) - 3X_{3I+2}(d_{3I+2}+j) + X_{3I+1}(d_{3I+1}+j)\\
	&\quad - 9X_{3I+3}(d_{3I+3}+j-1) + 9X_{3I+2}(d_{3I+2}+j-1) - 3X_{3I+1}(d_{3I+1}+j-1)\\
	&\quad + 9X_{3I+3}(d_{3I+3}+j-2) - 9X_{3I+2}(d_{3I+2}+j-2) + 3X_{3I+1}(d_{3I+1}+j-2).
\end{align*}
Recalling the assumption that
$\nu\big(X_i(d_i+j)\big)\ge\left\lfloor\frac{j+1}{2}\right\rfloor$ is valid
for every $j\ge 0$ when $i=3I+1$, $3I+2$ and $3I+3$, we are able to confirm
\eqref{eq:3-adic-j} for $j\ge 2$ since
\begin{align*}
\nu\big(X_{3I+4}(d_{3I+4}+j)\big)
 &\ge\min\Bigg\{\left\lfloor\dfrac{j+1}{2}\right\rfloor+1,
\left\lfloor\dfrac{j+1}{2}\right\rfloor+1,
\left\lfloor\dfrac{j+1}{2}\right\rfloor+0,\\
 &\qquad\qquad\left\lfloor\dfrac{j+0}{2}\right\rfloor+2,
\left\lfloor\dfrac{j+0}{2}\right\rfloor+2,
\left\lfloor\dfrac{j+0}{2}\right\rfloor+1,\\
 &\qquad\qquad\left\lfloor\dfrac{j-1}{2}\right\rfloor+2,
\left\lfloor\dfrac{j-1}{2}\right\rfloor+2,
\left\lfloor\dfrac{j-1}{2}\right\rfloor+1\Bigg\}\\
 &=\left\lfloor\dfrac{j+1}{2}\right\rfloor.
\end{align*}
Moreover, a similar analysis may be carried out for $i=3I+5$ and $3I+6$, and
thus our desired claim holds by induction.
\end{proof}

\begin{remark}\label{rmk:cX-min-power}
By recourse to \eqref{eq:3-adic-0}, it is immediate that for $i\ge 1$, the coefficient $X_i(d_i)$ never vanishes; otherwise, it is divisible by $3$, thereby contradicting \eqref{eq:3-adic-0}. Recalling the second part of
Theorem \ref{th:xi-poly}, we conclude that the minimal $\xi$-power in the polynomial representation of $\cX_i$ is \textbf{exactly} of degree
$d_i=\left\lceil \frac{i}{3}\right\rceil$.
\end{remark}

\section{Internal congruences for $ph_3(n)$}\label{sec:ph-cong-proof}

We are about to study the following family of series for $M\ge 1$:
\begin{align}
\Phi_M(q):=\begin{cases}
\displaystyle\dfrac{1}{F(q^3)}\sum_{n\ge 0}ph_3\big(3^{2m-1}n\big)q^n,
&\quad\textrm{if $M=2m-1$},\\
\displaystyle\dfrac{1}{F(q)}\sum_{n\ge 0}ph_3\big(3^{2m}n\big)q^n,
&\quad\textrm{if $M=2m$}.
\end{cases}
\end{align}
It is clear that for any $m\ge 1$,
\begin{align}
\Phi_{2m} &=U\big(\Phi_{2m-1}\big),\label{eq:Phi-even}\\
\Phi_{2m+1} &=U\big(\gamma\Phi_{2m}\big).\label{eq:Phi-odd}
\end{align}

Let us first consider a general setting, wherein the notation is as in
Sect.~\ref{sec:phi-notation}.

\begin{lemma}
For any $\Lambda\in \mathbb{Z}[\xi]$ such that the minimal $\xi$-power in
$\Lambda$ is no lower that $\xi^L$, then the following statements are true:
\begin{enumerate}
[label={(\roman*)~},leftmargin=*,labelsep=0cm,align=left,itemsep=6pt]
\item $U\big(\Lambda\big)$ is in $\mathbb{Z}[\xi]$, and the minimal
$\xi$-power in this polynomial representation is of degree at least
$\left\lceil\frac{L}{3}\right\rceil$;
		
\item $U\big(\gamma\Lambda\big)$ is in $\mathbb{Z}[\xi]$, and the minimal
$\xi$-power in this polynomial representation is of degree at least
$\left\lceil\frac{L-2}{3}\right\rceil$.
\end{enumerate}
\end{lemma}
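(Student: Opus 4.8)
The idea is to reduce both statements to the structural facts about $\cX_i = U(\xi^i)$ already in hand, namely that $\cX_i \in \mathbb{Z}[\xi]$ with minimal $\xi$-degree exactly $d_i = \lceil i/3 \rceil$ (Theorem~\ref{th:xi-poly} together with Remark~\ref{rmk:cX-min-power}), and the identity $U(\gamma\xi^i) = \xi^{-1}\cX_{i+1}$ from \eqref{eq:U-gamma-cX}. Since $U$ is linear, writing $\Lambda = \sum_{i \ge L} c_i \xi^i$ with $c_i \in \mathbb{Z}$ gives $U(\Lambda) = \sum_{i \ge L} c_i \cX_i$, a $\mathbb{Z}$-linear combination of polynomials in $\mathbb{Z}[\xi]$, hence a member of $\mathbb{Z}[\xi]$; this settles the first half of (i). For the degree bound, each $\cX_i$ appearing in the sum has $i \ge L$, so its minimal $\xi$-power is $\xi^{d_i}$ with $d_i = \lceil i/3 \rceil \ge \lceil L/3 \rceil$. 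A priori a linear combination could have lower-order cancellation, but the monotonicity $d_L \le d_{L+1} \le \cdots$ means every term has $\xi$-degree at least $d_L = \lceil L/3 \rceil$, so no term can contribute a power below $\xi^{\lceil L/3 \rceil}$ in the first place; hence $U(\Lambda)$ has minimal $\xi$-degree at least $\lceil L/3 \rceil$. This proves (i).

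For (ii), I would apply the same expansion together with \eqref{eq:U-gamma-cX}: with $\Lambda = \sum_{i \ge L} c_i \xi^i$,
\begin{align*}
U(\gamma\Lambda) = \sum_{i \ge L} c_i\, U(\gamma \xi^i) = \sum_{i \ge L} c_i\, \xi^{-1}\cX_{i+1} = \xi^{-1} \sum_{i \ge L} c_i\, \cX_{i+1}.
\end{align*}
The inner sum $\sum_{i \ge L} c_i \cX_{i+1}$ lies in $\mathbb{Z}[\xi]$ by Theorem~\ref{th:xi-poly}, and by the argument just given its minimal $\xi$-power has degree at least $d_{L+1} = \lceil (L+1)/3 \rceil$. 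Multiplying by $\xi^{-1}$ then produces a genuine polynomial in $\mathbb{Z}[\xi]$ (the negative power is absorbed), with minimal $\xi$-degree at least $\lceil (L+1)/3 \rceil - 1$. It remains only to check the elementary identity $\lceil (L+1)/3 \rceil - 1 = \lceil (L-2)/3 \rceil$, which holds for every integer $L$ since shifting the argument of the ceiling by $3$ shifts the value by exactly $1$. This gives the bound claimed in (ii).

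The one point that deserves care — and is the only place the argument is not purely formal — is the claim that the minimal $\xi$-degree of $\sum_{i \ge L} c_i \cX_{i+1}$ is at least $d_{L+1}$ rather than merely being "$\ge d_{L+1}$ for each summand, but possibly destroyed by cancellation." Here the resolution is exactly the monotonicity of $d_i$ in $i$: since the smallest index occurring is $i+1 = L+1$, and $d_i$ is nondecreasing, \emph{every} $\cX_{i+1}$ in the sum has all its nonzero coefficients supported on $\xi$-powers of degree $\ge d_{L+1}$, so the coefficient of any $\xi^j$ with $j < d_{L+1}$ in the whole sum is a $\mathbb{Z}$-linear combination of zeros, hence zero — there is nothing for cancellation to act on. (One does \emph{not} get, nor need, that the coefficient of $\xi^{d_{L+1}}$ is nonzero; the statement only asserts a lower bound on the minimal degree.) With that observation in place both (i) and (ii) follow immediately, so I expect the proof to be short, with the verification of the ceiling-function identity in (ii) and the no-cancellation remark being the only items worth spelling out.
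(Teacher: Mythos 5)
Your proposal is correct and follows essentially the same route as the paper's own proof: expand $\Lambda=\sum_{\ell\ge L}a_\ell\xi^\ell$, use linearity of $U$ together with $\cX_\ell=U(\xi^\ell)$ and $U(\gamma\xi^\ell)=\xi^{-1}\cX_{\ell+1}$, invoke the monotonicity of $d_\ell=\lceil \ell/3\rceil$ for the degree bounds, and finish with the identity $\lceil\frac{L+1}{3}\rceil-1=\lceil\frac{L-2}{3}\rceil$. Your extra remark that no cancellation issue can arise (since every summand is already supported on degrees at least $d_{L+1}$) is a correct and slightly more explicit justification of the step the paper dispatches with the phrase ``since the ceiling function is non-decreasing.''
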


\begin{proof}
Assume that
\begin{align*}
\Lambda=\sum_{\ell\ge L}a_\ell\xi^\ell.
\end{align*}
We have
\begin{align}\label{eq:U-Lambda}
U\big(\Lambda\big)=\sum_{\ell\ge L}a_\ell U\big(\xi^\ell\big)
=\sum_{\ell\ge L}a_\ell\cX_\ell.
\end{align}
Since the ceiling function is non-decreasing, the minimal $\xi$-power in
$U\big(\Lambda\big)$ is no lower than that in $\cX_L$, which is
$\xi^{\lceil\frac{L}{3}\rceil}$ in light of Remark \ref{rmk:cX-min-power}.
Next,
\begin{align}\label{eq:U-gamma*Lambda}
U\big(\gamma\Lambda\big)=\sum_{\ell\ge L}a_\ell U\big(\gamma\xi^\ell\big)
=\sum_{\ell\ge L}a_\ell\xi^{-1}\cX_{\ell+1},
\end{align}
where we have utilized \eqref{eq:U-gamma-cX}. Now the minimal $\xi$-power in
$U\big(\gamma\Lambda\big)$ is no lower than that in $\xi^{-1}\cX_{L+1}$,
while we note that $\left\lceil\frac{L+1}{3}\right\rceil-1
=\left\lceil\frac{L-2}{3}\right\rceil$.
\end{proof}

Now we observe that
\begin{align*}
\Phi_1=\dfrac{1}{F(q^3)}\sum_{n\ge 0}ph_3\big(3n+2\big)q^n
=U\big(\gamma\big).
\end{align*}
In light of \eqref{eq:xi1} and \eqref{eq:U-gamma-cX}, we have
\begin{align}\label{eq:Phi1}
\Phi_1=1-3\xi+3\xi^2.
\end{align}
By virtue of \eqref{eq:Phi-even}, we apply \eqref{eq:U-Lambda} and get
\begin{align}\label{eq:Phi2}
\Phi_2=1-9\xi+36\xi^2-81\xi^3+135\xi^4-162\xi^5+81\xi^6.
\end{align}
Furthermore, utilizing \eqref{eq:Phi-odd} and \eqref{eq:U-gamma*Lambda}
yields
\begin{align}\label{eq:Phi3}
\Phi_3
&=55-2163\xi+34509\xi^2-330318\xi^3+2227338\xi^4-11501919\xi^5\notag\\
&\quad+47744397\xi^6-164234952\xi^7+477601434\xi^8-1189266543\xi^9\notag\\
&\quad+2554873083\xi^{10}-4751141589\xi^{11}
+7644778785\xi^{12}-10594276335\xi^{13}\notag\\
&\quad+12526595811\xi^{14}-12440502369\xi^{15}
+10115979435\xi^{16}-6457008150\xi^{17}\notag\\
&\quad+3013270470\xi^{18}-903981141\xi^{19}+129140163\xi^{20}.
\end{align}
In general, by iterating the above process, the following claim is clear.

\begin{theorem}\label{th:Phi-poly}
For any $M\ge 1$, we have $\Phi_M \in \mathbb{Z}[\xi]$.
\end{theorem}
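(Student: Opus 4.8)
The plan is to prove Theorem~\ref{th:Phi-poly} by induction on $M$, using the recurrences \eqref{eq:Phi-even} and \eqref{eq:Phi-odd} together with the structural lemma just established about how $U$ and $U(\gamma\,\cdot\,)$ act on $\mathbb{Z}[\xi]$.

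First I would record the base cases: by \eqref{eq:Phi1} (or already by the computation $\Phi_1 = U(\gamma) = \xi^{-1}\cX_1$ via \eqref{eq:U-gamma-cX} and \eqref{eq:xi1}) we have $\Phi_1 = 1 - 3\xi + 3\xi^2 \in \mathbb{Z}[\xi]$. In fact the cleanest starting point is to observe that
\begin{align*}
\Phi_1 = \dfrac{1}{F(q^3)}\sum_{n\ge 0} ph_3(3n+2)q^n = U\!\left(\dfrac{F(q)}{F(q^3)}\right) = U(\gamma),
\end{align*}
which lies in $\mathbb{Z}[\xi]$ by the second part of Theorem~\ref{th:xi-poly}. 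Then I would run the inductive step in two halves mirroring the parity split in the definition of $\Phi_M$. Suppose $\Phi_M \in \mathbb{Z}[\xi]$ for some $M\ge 1$. If $M = 2m-1$ is odd, then $\Phi_{M+1} = \Phi_{2m} = U(\Phi_{2m-1})$ by \eqref{eq:Phi-even}, and applying part~(i) of the Lemma with $\Lambda = \Phi_{2m-1}$ gives $\Phi_{M+1} \in \mathbb{Z}[\xi]$. If instead $M = 2m$ is even, then $\Phi_{M+1} = \Phi_{2m+1} = U(\gamma\Phi_{2m})$ by \eqref{eq:Phi-odd}, and part~(ii) of the same Lemma with $\Lambda = \Phi_{2m}$ yields $\Phi_{M+1} \in \mathbb{Z}[\xi]$. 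This closes the induction.

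The only subtle point—and the one I would make sure to articulate carefully—is the legitimacy of expanding $U(\Lambda)$ and $U(\gamma\Lambda)$ termwise over a (finite) $\xi$-expansion, i.e. the identities \eqref{eq:U-Lambda} and \eqref{eq:U-gamma*Lambda}; since each $\Phi_M$ arising in the induction is a genuine polynomial in $\xi$ with only finitely many terms, linearity of $U$ applies without any convergence concerns, and the displayed formulas \eqref{eq:Phi1}--\eqref{eq:Phi3} serve as explicit illustrations that the mechanism behaves as claimed. No part of this is a real obstacle: the structural work (that $\cX_i \in \mathbb{Z}[\xi]$, that $U(\gamma\xi^i) = \xi^{-1}\cX_{i+1}$, and the control on minimal $\xi$-powers) has already been done in Sections~\ref{sec:phi-notation} and~\ref{sec:3-adic}, so Theorem~\ref{th:Phi-poly} is essentially a bookkeeping consequence. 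If I wanted to squeeze a little more out of the argument for later use, I would additionally track the minimal $\xi$-power of $\Phi_M$ through the induction (it stays at degree $0$, since $\Phi_1$ has a nonzero constant term and both $U$ and $U(\gamma\,\cdot\,)$ preserve a degree-$0$ term here), but strictly speaking the statement as phrased only requires membership in $\mathbb{Z}[\xi]$.
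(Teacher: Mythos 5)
Your proof is correct and takes essentially the same approach as the paper: start from $\Phi_1=U(\gamma)=1-3\xi+3\xi^2$ and iterate \eqref{eq:Phi-even} and \eqref{eq:Phi-odd} using parts (i) and (ii) of the preceding lemma. The paper merely asserts that the claim is ``clear'' by iterating this process, so your explicit parity-split induction is just a more careful write-up of the same argument.
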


Now we recall that our objective is to prove the congruence
\eqref{eq:ph-cong}. Hence, we shall work on a new family of series for
$m\ge 1$:
\begin{align}
\widehat{\Phi}_m:=\dfrac{1}{F(q^3)}\sum_{n\ge 0}
{\left(ph_3\big(3^{2m+1}n\big)-ph_3\big(3^{2m-1}n\big)\right)}q^n.
\end{align}
In other words,
\begin{align}\label{eq:Phi-hat-exp-2}
\widehat{\Phi}_m=\Phi_{2m+1}-\Phi_{2m-1}.
\end{align}

It is clear that the congruence \eqref{eq:ph-cong} in Theorem
\ref{th:ph-cong} is a straightforward consequence of the following result:

\begin{theorem}
For any $m\ge 1$, we have $\widehat{\Phi}_m\in \mathbb{Z}[\xi]$.
Furthermore, if we write
\begin{align}\label{eq:Psi-hat-coeff}
\widehat{\Phi}_m=\sum_{k}C_m(k)\xi^k,
\end{align}
then for any $k\ge 0$,
\begin{align}\label{eq:nu-C-bound}
\nu\big(C_m(k)\big)\ge m+2+\left\lfloor\dfrac{k}{2}\right\rfloor.
\end{align}
\end{theorem}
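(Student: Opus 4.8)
The plan is to prove the stronger quantitative statement \eqref{eq:nu-C-bound} by induction on $m$, using the recurrences \eqref{eq:Phi-even}--\eqref{eq:Phi-odd} together with the $3$-adic information assembled in Section~\ref{sec:3-adic}. Since $\widehat{\Phi}_m = \Phi_{2m+1}-\Phi_{2m-1} = U\big(\gamma\, U(\gamma \Phi_{2m-1})\big) - \Phi_{2m-1}$, a single step of the induction processes $\widehat{\Phi}_m$ into $\widehat{\Phi}_{m+1}$ by two applications of the operators $U(\cdot)$ and $U(\gamma\,\cdot)$; the key is that both operations are \emph{linear} on $\mathbb{Z}[\xi]$, so it suffices to track what each does to a single monomial $\xi^\ell$. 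Concretely, $U(\xi^\ell)=\cX_\ell$ and $U(\gamma\xi^\ell)=\xi^{-1}\cX_{\ell+1}$ by \eqref{eq:U-gamma-cX}, and Lemma~\ref{le:3-adic-X} tells us precisely that $\cX_\ell = \sum_{t\ge 0} X_\ell(d_\ell+t)\,\xi^{d_\ell+t}$ with $\nu\big(X_\ell(d_\ell+t)\big)\ge \lfloor (t+1)/2\rfloor$ (and equality to $0$ at $t=0$). So each operator roughly divides the minimal $\xi$-exponent by $3$ while injecting a gain in $3$-adic valuation that grows like half the distance from the bottom of the support.

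First I would set up the base case $m=1$: from the explicit formulas \eqref{eq:Phi1} and \eqref{eq:Phi3} one computes $\widehat{\Phi}_1 = \Phi_3 - \Phi_1$ and checks directly that every coefficient $C_1(k)$ satisfies $\nu\big(C_1(k)\big)\ge 3 + \lfloor k/2\rfloor$; this is a finite verification on the $21$ listed coefficients of $\Phi_3$. Next I would isolate the inductive engine as an auxiliary lemma of the following shape: if $\Lambda = \sum_{\ell\ge L} a_\ell \xi^\ell \in \mathbb{Z}[\xi]$ satisfies the valuation bound $\nu(a_\ell)\ge v + \lfloor (\ell - L)/2\rfloor$ for some constants $v,L$, then $U(\Lambda)$ (resp. $\xi\,U(\gamma\Lambda)$, i.e. $\cX$ shifted) lies in $\mathbb{Z}[\xi]$ with minimal exponent at least $\lceil L/3\rceil$ and coefficients obeying an analogous bound $\nu(\text{coeff of }\xi^{k})\ge v + \lfloor (k - \lceil L/3\rceil)/2\rfloor + g$, where the extra gain $g\ge 1$ is exactly what one extracts from the $\lfloor (t+1)/2\rfloor$ term in Lemma~\ref{le:3-adic-X} combined with the fact that the ``$d_\ell$'' of a monomial $\xi^\ell$ with $\ell \ge L$ sits at height roughly $\lceil \ell/3\rceil \ge \lceil L/3\rceil$, so the image coefficients receive contributions only from $\xi$-powers that are already a positive distance above the floor. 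Applying this lemma twice (once for $U$ and once for $U(\gamma\,\cdot)$) upgrades the bound $\nu(C_m(k))\ge m+2+\lfloor k/2\rfloor$ for $\widehat{\Phi}_m$ to $\nu(C_{m+1}(k))\ge m+3+\lfloor k/2\rfloor$ for $\widehat{\Phi}_{m+1} = U\big(\gamma\,U(\gamma \widehat{\Phi}_m)\big)$ — here one uses that $U$ and $U(\gamma\,\cdot)$ commute with the difference operator, so $\widehat{\Phi}_{m+1}$ is obtained from $\widehat{\Phi}_m$ by the same two operators that send $\Phi_{2m-1}\mapsto\Phi_{2m+1}$.

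The main obstacle I anticipate is bookkeeping the valuation bookkeeping correctly through the two nested applications: one must verify that at each application the valuation \emph{gain} is at least $1$ uniformly across all output coefficients $\xi^k$, including the bottom one $k=\lceil L/3\rceil$, and that the residual ``$\lfloor \cdot/2\rfloor$'' shape of the bound is exactly preserved rather than degraded. The delicate point is the interaction between the floor $\lfloor (t+1)/2\rfloor$ in Lemma~\ref{le:3-adic-X} and the floor $\lfloor k/2\rfloor$ in the target: one needs an inequality of the type $\lfloor (\ell-L)/2\rfloor + \lfloor (t+1)/2\rfloor \ge \lfloor (k - \lceil L/3\rceil)/2\rfloor + 1$ whenever $\xi^{k}$ can arise from $\cX_\ell$ contributing its $\xi^{d_\ell + t}$ term (so $k = d_\ell + t$ and $\ell\ge L$), and this must be checked by a careful but elementary case analysis on the parities and on $\ell \bmod 3$, exactly paralleling the nine-term minimum computation already carried out in the proof of Lemma~\ref{le:3-adic-X}. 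Once that single arithmetic inequality is nailed down, the induction closes mechanically and \eqref{eq:nu-C-bound} — hence membership of $\widehat{\Phi}_m$ in $\mathbb{Z}[\xi]$ and the congruence \eqref{eq:ph-cong} — follows. A secondary, milder issue is making sure the minimal-$\xi$-power tracking is tight enough that the ``$+2$'' offset in $m+2$ (rather than $m+1$) is actually delivered by the base case and not lost; this is why the explicit check of $\widehat{\Phi}_1$ should record the sharp valuations, not merely a lower bound.
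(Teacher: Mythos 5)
There is a genuine gap, and it sits exactly at the point you flag as the ``main obstacle.'' Your plan is to close the induction by pure $3$-adic bookkeeping: propagate the bound $\nu(C_m(k))\ge m+2+\lfloor k/2\rfloor$ through $U$ and then $U(\gamma\,\cdot)$, extracting a uniform gain of $+1$ from the $\lfloor (t+1)/2\rfloor$ valuations in Lemma~\ref{le:3-adic-X} because ``the image coefficients receive contributions only from $\xi$-powers that are already a positive distance above the floor.'' That last claim fails for the low-degree monomials. Write $U(\widehat{\Phi}_m)=\sum_\ell \widetilde{C}_m(\ell)\xi^\ell$; the first application gives no gain at all (one only gets $\nu(\widetilde{C}_m(\ell))\ge m+2+\lfloor \ell/2\rfloor$, the same offset), so the entire gain must come from $U(\gamma\,\cdot)$. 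There, the mechanism $\ell\ge d_{\ell+1}+1$ that produces the extra $+1$ only works for $\ell\ge 2$. For $\ell=0$ and $\ell=1$ one has $U(\gamma)=1-3\xi+3\xi^2$ and $U(\gamma\xi)=-2+9\xi-24\xi^2+\cdots$, whose constant terms have valuation $0$; hence the constant term of $\widehat{\Phi}_{m+1}$ is $\widetilde{C}_m(0)-2\widetilde{C}_m(1)$ plus higher-valuation terms, and bookkeeping alone only yields $\nu\ge m+2$, one short of the required $m+3$ (similarly the $\xi^2$ coefficient falls one short). No parity case analysis can repair this, because the individual valuations really are only $m+2$ in general.

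The missing idea is a non-formal input: the constant term of $\widehat{\Phi}_{m+1}(q)=\frac{1}{F(q^3)}\sum_{n\ge0}\big(ph_3(3^{2m+3}n)-ph_3(3^{2m+1}n)\big)q^n$ vanishes identically, since the $n=0$ coefficient is $ph_3(0)-ph_3(0)=0$. Setting $q=0$ then forces $\widetilde{C}_m(0)-2\widetilde{C}_m(1)\equiv 0\pmod{3^{m+3}}$, which is precisely the cancellation needed for the $\xi^0$ coefficient, and combined with $\nu(\widetilde{C}_m(1))\ge m+2$ it also delivers $3\widetilde{C}_m(0)-24\widetilde{C}_m(1)\equiv 0\pmod{3^{m+4}}$ for the $\xi^2$ coefficient. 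This step is not cosmetic: the paper's Remark~\ref{rmk:ps-to-ph} points out that the analogous argument for $ps_3$ breaks down precisely because the corresponding constant term $\widehat{\Psi}_m(0)$ does not obviously vanish, which is why Theorem~\ref{th:ps-cong} is deduced from the $ph_3$ case rather than proved directly. (Minor additional slip: $\Phi_{2m+1}=U(\gamma\,U(\Phi_{2m-1}))$, so $\widehat{\Phi}_{m+1}=U(\gamma\,U(\widehat{\Phi}_m))$; your formula has a spurious extra $\gamma$ in the inner operator.)
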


\begin{proof}
We perform our proof by induction on $m$. Clearly, the $m=1$ case can be
confirmed by invoking \eqref{eq:Phi1} and \eqref{eq:Phi3}. Now assuming
the validity of the statement for a certain $m$, we shall prove
\eqref{eq:nu-C-bound} for $m+1$. Our starting point is the following
observation:
\begin{align}
\widehat{\Phi}_{m+1}=U\big(\gamma U\big(\widehat{\Phi}_m\big)\big).
\end{align}
This can be shown by sequentially acting the operators $U\big(\bullet\big)$
and $U\big(\gamma\cdot \bullet\big)$ to both sides of
\eqref{eq:Phi-hat-exp-2}, while for the right-hand side, we also require
\eqref{eq:Phi-even} and \eqref{eq:Phi-odd}.
	
Now, by \eqref{eq:X-coeff},
\begin{align*}
U\big(\widehat{\Phi}_m\big) &=C_m(0)+\sum_{k\ge 1}C_m(k)\cX_k\\
 &=C_m(0)+\sum_{k\ge 1}C_m(k)\sum_{j\ge 0}X_k(d_k+j)\xi^{d_k+j}.
\end{align*}
Let us write
\begin{align*}
U\big(\widehat{\Phi}_m\big)=\sum_{\ell}\widetilde{C}_m(\ell)\xi^\ell.
\end{align*}
We shall show that for any $\ell\ge 0$,
\begin{align}\label{eq:nu-C'-bound}
\nu\big(\widetilde{C}_m(\ell)\big) \ge m+2+\left\lfloor\frac{\ell}{2}\right\rfloor.
\end{align}
To see this, we note that $C_m(0)$ contributes to $\widetilde{C}_m(0)\xi^0$, while
according to our assumption,
\begin{align*}
\nu\big(C_m(0)\big)\ge m+2+\left\lfloor\frac{0}{2}\right\rfloor.
\end{align*}
Also, for the each term $C_m(k)X_k(d_k+j)\xi^{d_k+j}$ contributing to
$\widetilde{C}_m(d_k+j)\xi^{d_k+j}$, we note that $k\ge d_k=\left\lceil \frac{k}{3}\right\rceil$
whenever $k\ge 1$, and conclude that
\begin{align*}
\nu\big(C_m(k)X_k(d_k+j)\big)
 &=\nu\big(C_m(k)\big)+\nu\big(X_k(d_k+j)\big)\\
 &\ge\left(m+2+\left\lfloor\frac{k}{2}\right\rfloor\right)
+\left\lfloor\frac{j+1}{2}\right\rfloor\\
 &\ge m+2+\left\lfloor\frac{d_k}{2}\right\rfloor
+\left\lfloor\frac{j+1}{2}\right\rfloor\\
 &\ge m+2+\left\lfloor\frac{d_k+j}{2}\right\rfloor,
\end{align*}
where $\nu\big(C_m(k)\big)$ is bounded by the inductive hypothesis and
$\nu\big(X_k(d_k+j)\big)$ is bounded by Lemma \ref{le:3-adic-X}. Therefore,
\eqref{eq:nu-C'-bound} is established.
	
Next, we have
\begin{align*}
\widehat{\Phi}_{m+1} &=U\big(\gamma U\big(\widehat{\Phi}_m\big)\big)\\
 &=\sum_{\ell\ge 0}\widetilde{C}_m(\ell)U\big(\gamma \xi^\ell\big)\\
 &=\sum_{\ell\ge 0}\widetilde{C}_m(\ell)\xi^{-1}\cX_{\ell+1}\\
 &=\sum_{\ell\ge 0}\widetilde{C}_m(\ell)\sum_{j\ge 0}
X_{\ell+1}(d_{\ell+1}+j)\xi^{d_{\ell+1}+j-1}.
\end{align*}
Here we have applied \eqref{eq:U-gamma-cX} for the third equality. Taking
out the summands with $\ell=0$ and $1$ gives
\begin{align}\label{eq:Phi-hat-expand-m+1}
\widehat{\Phi}_{m+1} &=\widetilde{C}_m(0)\big(1-3\xi+3\xi^2\big)\notag\\
 &\quad+\widetilde{C}_m(1)
\big({-2}+9\xi-24\xi^2+45\xi^3-54\xi^4+27\xi^5\big)\notag\\
 &\quad+\sum_{\ell\ge 2}\widetilde{C}_m(\ell)
\sum_{j\ge 0}X_{\ell+1}(d_{\ell+1}+j)\xi^{d_{\ell+1}+j-1},
\end{align}
where $U\big(\gamma\big)=\xi^{-1}U\big(\xi\big)$ and
$U\big(\gamma\xi\big)=\xi^{-1}U\big(\xi^2\big)$ can be deduced by
\eqref{eq:xi1} and \eqref{eq:xi2}, respectively. Recall that
\begin{align*}
\widehat{\Phi}_{m+1}=\sum_{k}C_{m+1}(k)\xi^k.
\end{align*}
To show \eqref{eq:nu-C-bound} for the case of $m+1$, we need to prove that
\begin{align}\label{eq:nu-C-bound-m+1}
\nu\big(C_{m+1}(k)\big)\ge(m+1)+2+\left\lfloor\dfrac{k}{2}\right\rfloor
=m+3+\left\lfloor\frac{k}{2}\right\rfloor.
\end{align}
For the right-hand side of \eqref{eq:Phi-hat-expand-m+1}, the easier cases
are those with $\ell\ge 2$. Note that in such scenarios,
$\ell\ge d_{\ell+1}+1=\left\lceil\frac{\ell+1}{3}\right\rceil+1$. Thus,
\begin{align*}
\nu\big(\widetilde{C}_m(\ell)X_{\ell+1}(d_{\ell+1}+j)\big)
 &=\nu\big(\widetilde{C}_m(\ell)\big)+\nu\big(X_{\ell+1}(d_{\ell+1}+j)\big)\\
 &\ge \left(m+2+\left\lfloor\frac{\ell}{2}\right\rfloor\right)
+\left\lfloor\frac{j+1}{2}\right\rfloor\\
 &\ge m+2+\left\lfloor\frac{d_{\ell+1}+1}{2}\right\rfloor
+\left\lfloor\frac{j+1}{2}\right\rfloor\\
 &\ge m+2+\left\lfloor\frac{d_{\ell+1}+j+1}{2}\right\rfloor\\
 &=m+3+\left\lfloor\frac{d_{\ell+1}+j-1}{2}\right\rfloor.
\end{align*}
Meanwhile, a routine computation reveals that the contributions from
\begin{align*}
\widetilde{C}_m(0)\cdot (-3\xi)
\end{align*}
and
\begin{align*}
\widetilde{C}_m(1)\cdot(9\xi),\qquad \widetilde{C}_m(1)\cdot(45\xi^3),
\qquad \widetilde{C}_m(1)\cdot(-54\xi^4),\qquad \widetilde{C}_m(1)\cdot(27\xi^5)
\end{align*}
all match with \eqref{eq:nu-C-bound-m+1}. Hence, it will suffice to verify
that
\begin{align}
\nu\big(\widetilde{C}_m(0)-2\widetilde{C}_m(1)\big)
 &\ge m+3+\left\lfloor\dfrac{0}{2}\right\rfloor,\label{eq:nu-C'-extra-0}\\
\nu\big(3\widetilde{C}_m(0)-24\widetilde{C}_m(1)\big)
 &\ge m+3+\left\lfloor\dfrac{2}{2}\right\rfloor.\label{eq:nu-C'-extra-2}
\end{align}
We begin by observing from \eqref{eq:nu-C'-bound} that
\begin{align*}
\nu\big(3\widetilde{C}_m(0)-24\widetilde{C}_m(1)\big)\ge1+(m+2)=m+3.
\end{align*}
Thus, modulo $3^{m+3}$,
\begin{align*}
\widehat{\Phi}_{m+1}(q)\equiv \widetilde{C}_m(0)-2\widetilde{C}_m(1)\pmod{3^{m+3}}.
\end{align*}
In particular,
\begin{align}\label{eq:Phi-hat-0-cong-1}
\widehat{\Phi}_{m+1}(0)\equiv \widetilde{C}_m(0)-2\widetilde{C}_m(1)\pmod{3^{m+3}}.
\end{align}
Meanwhile, we note that the constant term in
\begin{align*}
\widehat{\Phi}_{m+1}(q)=\dfrac{1}{F(q^3)}\sum_{n\ge 0}
\left(ph_3\big(3^{2m+3}n\big)-ph_3\big(3^{2m+1}n\big)\right)q^n
\end{align*}
vanishes since
\begin{align*}
ph_3\big(3^{2m+3}0\big)-ph_3\big(3^{2m+1}0\big)=ph_3(0)-ph_3(0)=0.
\end{align*}
Thus,
\begin{align}\label{eq:Phi-hat-0-cong-2}
\widehat{\Phi}_{m+1}(0) = 0.
\end{align}
Combining \eqref{eq:Phi-hat-0-cong-1} and \eqref{eq:Phi-hat-0-cong-2} gives
\begin{align}\label{eq:Phi-hat-0-cong-1&2}
\widetilde{C}_m(0)-2\widetilde{C}_m(1) \equiv 0 \pmod{3^{m+3}}.
\end{align}
This yields \eqref{eq:nu-C'-extra-0}. In the meantime, recalling that
$\widetilde{C}_m(1)\equiv0\pmod{3^{m+2}}$ by \eqref{eq:nu-C'-bound}, we have
$-6\widetilde{C}_m(1)\equiv 0 \pmod{3^{m+3}}$, so that
\begin{align*}
\widetilde{C}_m(0)-8\widetilde{C}_m(1)\equiv0\pmod{3^{m+3}},
\end{align*}
or equivalently,
\begin{align}
3\widetilde{C}_m(0)-24\widetilde{C}_m(1) \equiv 0 \pmod{3^{m+4}}.
\end{align}
Now \eqref{eq:nu-C'-extra-2} is also confirmed. Putting the above
arguments together, we arrive at \eqref{eq:nu-C-bound-m+1}, and thus
close the requested inductive step.
\end{proof}

\section{Internal congruences for $ps_3(n)$}\label{sec:ps-cong-proof}

Herein, we define
\begin{align}
G(q):=\dfrac{\psi(q^3)}{\psi(q)}.
\end{align}
and introduce another set of auxiliary functions:
\begin{align}\label{eq:delta-def}
\delta=\delta(q):= q^{-2}\frac{G(q)}{G(q^9)},
\end{align}
and
\begin{align}\label{eq:zeta-def}
\zeta=\zeta(q):= q\frac{\psi(q^9)}{\psi(q)}.
\end{align}
Furthermore, for $i\ge 0$, define
\begin{align}
\cZ_i:= U\big(\zeta^i\big).
\end{align}

We have the following result that exhibits the resemblance between
$U\big(\zeta^i\big)$ and $U\big(\xi^i\big)$, as well as between
$U\big(\delta\zeta^i\big)$ and $U\big(\gamma\xi^i\big)$:

\begin{theorem}
For every $i\ge 0$,
\begin{align}\label{eq:Z-coeff}
\cZ_i = \sum_{j} X_{i}(j)\zeta^j \in \mathbb{Z}[\zeta],
\end{align}
where the coefficients $X_i(j)$ are \textbf{identical} to those given in
\eqref{eq:X-coeff} for $\cX_i$. In addition,
\begin{align}\label{eq:U-delta-cZ}
U\big(\delta \zeta^i\big) = \zeta^{-1} \cZ_{i+1}.
\end{align}
\end{theorem}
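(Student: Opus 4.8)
The plan is to show that $\zeta$ stands to $\psi$ exactly as $\xi$ stands to $\varphi$, so that every identity of Section~\ref{sec:phi-notation} transfers verbatim once $\xi$, $\gamma$, $\cX_i$ are replaced by $\zeta$, $\delta$, $\cZ_i$. First I would record the analogue of the relation $\gamma=\xi(q)/\xi(q^3)$: starting from $\psi(q)=(q^2;q^2)_\infty^2/(q;q)_\infty$, a direct manipulation parallel to the computation of $\gamma$ in the proof of \eqref{eq:U-gamma-cX} yields
\begin{align*}
\frac{\zeta(q)}{\zeta(q^3)}
&=q^{-2}\,\frac{\psi(q^9)\,\psi(q^3)}{\psi(q)\,\psi(q^{27})}
=q^{-2}\,\frac{G(q)}{G(q^9)}=\delta(q).
\end{align*}
Granting this, the second assertion \eqref{eq:U-delta-cZ} is immediate: since $\zeta(q^3)^{-1}$ depends on $q$ only through $q^3$, which $U$ pulls out,
\begin{align*}
U\big(\delta\zeta^i\big)&=U\Big(\zeta(q)^{i+1}\cdot\zeta(q^3)^{-1}\Big)
=\zeta(q)^{-1}\,U\big(\zeta(q)^{i+1}\big)=\zeta^{-1}\cZ_{i+1},
\end{align*}
exactly as \eqref{eq:U-gamma-cX} was obtained.

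The substance of \eqref{eq:Z-coeff} lies in the three base cases, namely the assertion that
\begin{align*}
U\big(\zeta\big)&=\zeta-3\zeta^2+3\zeta^3,\\
U\big(\zeta^2\big)&=-2\zeta+9\zeta^2-24\zeta^3+45\zeta^4-54\zeta^5+27\zeta^6,\\
U\big(\zeta^3\big)&=\zeta-12\zeta^2+66\zeta^3-216\zeta^4+486\zeta^5-810\zeta^6+972\zeta^7-729\zeta^8+243\zeta^9,
\end{align*}
that is, $\cZ_i=\sum_j X_i(j)\zeta^j$ for $i=1,2,3$ with the \emph{same} integers $X_i(j)$ as in Lemma~\ref{le:xi-1-3}. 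In product form $\zeta$ is the eta-quotient
\begin{align*}
\zeta&=q\,\frac{(q^{18};q^{18})_\infty^2\,(q;q)_\infty}{(q^9;q^9)_\infty\,(q^2;q^2)_\infty^2},
\end{align*}
a modular function on the genus-zero modular curve $X_0(18)$ that is preserved by $U=U_3$; hence the three identities can be proved by the very same cusp analysis on $X_0(18)$ used for Lemma~\ref{le:xi-1-3}, or more mechanically by using \texttt{RaduRK} \cite{Smo2021} (an implementation of the Radu--Kolberg algorithm \cite{Rad2015}) to express each of $U(\zeta)$, $U(\zeta^2)$, $U(\zeta^3)$ through a Hauptmodul on $X_0(18)$ and then certifying the resulting eta-quotient identities with Garvan's \texttt{ETA} package \cite{Gar1999}. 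One may also observe that $\zeta$ and $\xi$ are essentially interchanged by the Fricke involution of $X_0(18)$ up to a power-of-$q$ twist, which suggests transporting Lemma~\ref{le:xi-1-3} through the interaction of that involution with $U_3$; the twist bookkeeping, however, makes a direct verification cleaner.

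With the base cases secured, I would rerun the proof of Lemma~\ref{le:cX-rec} verbatim with $\zeta$ in place of $\xi$. Setting $\zeta_k:=\zeta(\omega^k q)$ for a primitive cube root of unity $\omega$, Newton's identities express the elementary symmetric functions $\sigma_1,\sigma_2,\sigma_3$ of $\zeta_0,\zeta_1,\zeta_2$ in terms of $U(\zeta)$, $U(\zeta^2)$, $U(\zeta^3)$, which by the previous step coincide with the corresponding $\xi$-expressions; hence $\sigma_1=\sigma_2=3\zeta-9\zeta^2+9\zeta^3$ and $\sigma_3=\zeta-3\zeta^2+3\zeta^3$, and the characteristic relation $\zeta_k^3=\sigma_1\zeta_k^2-\sigma_2\zeta_k+\sigma_3$, summed over $k\in\{0,1,2\}$, gives
\begin{align*}
\cZ_i&=\big(\zeta-3\zeta^2+3\zeta^3\big)\big(3\cZ_{i-1}-3\cZ_{i-2}+\cZ_{i-3}\big),\qquad i\ge 3.
\end{align*}
Since $\cX_i$ is completely determined by the recurrence \eqref{eq:cX-rec} together with the three base cases of Lemma~\ref{le:xi-1-3} (this is precisely how the integers $X_i(j)$ arise in Theorem~\ref{th:xi-poly}), and $\cZ_i$ obeys the same recurrence with the same base cases in the variable $\zeta$, an immediate induction on $i$ yields $\cZ_i=\sum_j X_i(j)\zeta^j$ for all $i\ge0$ (the cases $i=0,1,2,3$ being already in hand). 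This proves \eqref{eq:Z-coeff}, and combined with the first step it also settles \eqref{eq:U-delta-cZ}.

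The genuine obstacle is the base-case verification: the first and third steps are purely formal and mirror Section~\ref{sec:phi-notation} line by line, but establishing the three identities for $\zeta$ — whether by a careful valence/cusp analysis on $X_0(18)$, by the automated \texttt{RaduRK}/\texttt{ETA} route, or via an explicit Atkin--Lehner correspondence with $\xi$ — requires genuine (if routine) modular-function input, and it is exactly here that the unexpected coincidence of the coefficients $X_i(j)$ with the $\varphi$-side must be checked rather than merely anticipated.
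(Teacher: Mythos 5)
Your proposal is correct and follows essentially the same route as the paper: verify the three base cases $\cZ_1,\cZ_2,\cZ_3$ by cusp analysis or the \texttt{RaduRK}/\texttt{ETA} machinery, transport the recurrence of Lemma \ref{le:cX-rec} via Newton's identities, and derive \eqref{eq:U-delta-cZ} from the identity $\delta=\zeta(q)/\zeta(q^3)$. The only difference is cosmetic ordering, and your identification of the base-case verification as the sole point of genuine modular-function input matches the paper's treatment.
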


\begin{proof}
Analogous to Lemma \ref{le:xi-1-3}, we have the following initial
evaluations for $\cZ_i$ by either a cusp analysis or an automated
computer-aided verification:
\begin{align}
	\cZ_1 &= \zeta - 3 \zeta^2 + 3 \zeta^3,\label{eq:zeta1}\\
	\cZ_2 &= -2 \zeta + 9 \zeta^2 - 24 \zeta^3 + 45 \zeta^4 - 54 \zeta^5 + 27 \zeta^6,\label{eq:zeta2}\\
	\cZ_3 &= \zeta - 12 \zeta^2 + 66 \zeta^3 - 216 \zeta^4 + 486 \zeta^5 - 810 \zeta^6\notag\\
	&\quad + 972 \zeta^7 - 729 \zeta^8 + 243 \zeta^9.\label{eq:zeta3}
\end{align}
Particularly, these polynomials on the right are identical to those in
\eqref{eq:xi1}, \eqref{eq:xi2} and \eqref{eq:xi3} after replacing
$\zeta$ with $\xi$. We may continue to copy the argument built on
Newton's identities for Lemma \ref{le:cX-rec}, and then
\eqref{eq:Z-coeff} becomes plain. Finally, note that
\begin{align*}
\delta=q^{-2}\dfrac{G(q)}{G(q^9)}=q^{-2}\dfrac{\psi(q^3)}{\psi(q)}
\dfrac{\psi(q^9)}{\psi(q^{27})}=q^{-3}\dfrac{\psi(q^3)}{\psi(q^{27})}
\cdot q\dfrac{\psi(q^9)}{\psi(q)}=\dfrac{\zeta(q)}{\zeta(q^3)},
\end{align*}
which immediately indicates \eqref{eq:U-delta-cZ}.
\end{proof}

Define, for $M\ge 1$:
\begin{align}
\Psi_M(q):=\begin{cases}
\displaystyle\dfrac{1}{G(q^3)}\sum_{n\ge 0}
ps_3{\left(3^{2m-1}n+\frac{3^{2m}-1}{4}\right)}q^n,
 &\quad\textrm{if $M=2m-1$},\\
\displaystyle\dfrac{1}{G(q)}\sum_{n\ge 0}
ps_3{\left(3^{2m}n+\frac{3^{2m}-1}{4}\right)}q^n,
 &\quad\textrm{if $M=2m$}.
\end{cases}
\end{align}
Then we also have that, for any $m\ge 1$,
\begin{align}
\Psi_{2m} &=U\big(\Psi_{2m-1} \big),\label{eq:Psi-even}\\
\Psi_{2m+1} &=U\big(\delta\Psi_{2m} \big).\label{eq:Psi-odd}
\end{align}
In a parallel way, the following result is true:

\begin{theorem}\label{th:Psi-poly}
For any $M\ge 1$, if we write
\begin{align*}
\Phi_M = \sum_{k} c_m(k) \xi^k \in \mathbb{Z}[\xi]
\end{align*}
according to Theorem \ref{th:Phi-poly}, then
\begin{align*}
\Psi_M = \sum_{k} c_m(k) \zeta^k \in \mathbb{Z}[\zeta]
\end{align*}
with the \textbf{same} coefficients in the two polynomial representations.
\end{theorem}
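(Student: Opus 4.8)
The plan is to run an induction on $M$ that mirrors, operation by operation, the construction of $\Phi_M$ carried out in Section~\ref{sec:ph-cong-proof}. The crux is that $\Phi_M$ and $\Psi_M$ arise from the \emph{same} recursive scheme --- alternately applying $U(\bullet)$ and $U(\gamma\,\cdot\,\bullet)$ via \eqref{eq:Phi-even}--\eqref{eq:Phi-odd} in the first case, and $U(\bullet)$ and $U(\delta\,\cdot\,\bullet)$ via \eqref{eq:Psi-even}--\eqref{eq:Psi-odd} in the second --- while the building blocks obey matching identities: $U(\xi^i)=\cX_i=\sum_j X_i(j)\xi^j$ against $U(\zeta^i)=\cZ_i=\sum_j X_i(j)\zeta^j$ with \emph{identical} coefficients $X_i(j)$ by \eqref{eq:Z-coeff}, and $U(\gamma\xi^i)=\xi^{-1}\cX_{i+1}$ against $U(\delta\zeta^i)=\zeta^{-1}\cZ_{i+1}$ by \eqref{eq:U-gamma-cX} and \eqref{eq:U-delta-cZ}. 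Thus the formal substitution $\xi\leftrightarrow\zeta$ intertwines the two recursions, and the coefficient sequences are forced to coincide.

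For the base case $M=1$, I would first check that $\Psi_1=U(\delta)$, exactly parallel to $\Phi_1=U(\gamma)$. From \eqref{eq:delta-def} we have $G(q^9)\,\delta(q)=q^{-2}G(q)$, and the coefficient of $q^{3k}$ on the right is $ps_3(3k+2)$; applying $U$, and using that $G(q^9)$ is a power series in $q^9$ and hence in $q^3$, we obtain $G(q^3)\,U(\delta)=\sum_{k\ge 0}ps_3(3k+2)q^k$, i.e. $U(\delta)=\Psi_1$ since $\tfrac{3^2-1}{4}=2$. Then \eqref{eq:U-delta-cZ} with $i=0$ together with \eqref{eq:zeta1} gives $\Psi_1=\zeta^{-1}\cZ_1=1-3\zeta+3\zeta^2$, which matches $\Phi_1=1-3\xi+3\xi^2$ from \eqref{eq:Phi1}. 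In the same spirit one verifies that \eqref{eq:Psi-even}--\eqref{eq:Psi-odd} are compatible with the shifts in the definition of $\Psi_M$: $U(\bullet)$ turns $3^{2m-1}n$ into $3^{2m}n$ and leaves $\tfrac{3^{2m}-1}{4}$ untouched, while $U(\delta\,\cdot\,\bullet)$, with $\delta=\zeta(q)/\zeta(q^3)=q^{-2}G(q)/G(q^9)$, relies on the identity $2\cdot 3^{2m}+\tfrac{3^{2m}-1}{4}=\tfrac{3^{2m+2}-1}{4}$.

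For the inductive step, assume $\Phi_M=\sum_k c_M(k)\xi^k$ and $\Psi_M=\sum_k c_M(k)\zeta^k$ with a common coefficient sequence. If $M$ is odd, then by \eqref{eq:Phi-even}, $\Phi_{M+1}=U(\Phi_M)=\sum_k c_M(k)\cX_k=\sum_j\big(\sum_k c_M(k)X_k(j)\big)\xi^j$, and by \eqref{eq:Psi-even} and \eqref{eq:Z-coeff}, $\Psi_{M+1}=U(\Psi_M)=\sum_j\big(\sum_k c_M(k)X_k(j)\big)\zeta^j$; both therefore have coefficient sequence $c_{M+1}(j)=\sum_k c_M(k)X_k(j)$. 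If $M$ is even, then by \eqref{eq:Phi-odd} and \eqref{eq:U-gamma-cX}, $\Phi_{M+1}=U(\gamma\Phi_M)=\sum_k c_M(k)\,\xi^{-1}\cX_{k+1}=\sum_j\big(\sum_k c_M(k)X_{k+1}(j)\big)\xi^{j-1}$, and identically $\Psi_{M+1}=\sum_j\big(\sum_k c_M(k)X_{k+1}(j)\big)\zeta^{j-1}$ by \eqref{eq:Psi-odd} and \eqref{eq:U-delta-cZ}; the negative exponent causes no trouble because $X_{k+1}(j)=0$ whenever $j<\lceil(k+1)/3\rceil$, and $\lceil(k+1)/3\rceil\ge 1$ for all $k\ge 0$, so only nonnegative powers survive --- precisely the mechanism already used to locate $\Phi_{M+1}$ in $\mathbb{Z}[\xi]$. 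Either way the two expansions share the same coefficients, which closes the induction and at the same time establishes $\Psi_M\in\mathbb{Z}[\zeta]$.

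The honest assessment is that by this point there is essentially no obstacle left: all the real content has been packed into the identities \eqref{eq:Z-coeff} and \eqref{eq:U-delta-cZ}, so Theorem~\ref{th:Psi-poly} is a formal consequence of the structural parallelism between the pairs $(\xi,\gamma)$ and $(\zeta,\delta)$. The only place that demands genuine --- but entirely routine --- care is the arithmetic-progression bookkeeping in the base case and in \eqref{eq:Psi-even}--\eqref{eq:Psi-odd}, namely checking that the linear shifts $\tfrac{3^{2m}-1}{4}$ are exactly those compatible with iterating $U$ after multiplying by $\delta$; and this repeats verbatim what was done for $ph_3$.
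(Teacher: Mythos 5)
Your proposal is correct and follows essentially the same route the paper intends: the paper gives no explicit proof of Theorem~\ref{th:Psi-poly}, simply asserting that it holds ``in a parallel way,'' and your induction on $M$ --- matching $U(\bullet)$ and $U(\gamma\,\cdot\,\bullet)$ against $U(\bullet)$ and $U(\delta\,\cdot\,\bullet)$ via the identical coefficient data in \eqref{eq:Z-coeff} and \eqref{eq:U-delta-cZ} --- is precisely the intended argument, with the arithmetic-progression bookkeeping for the base case and for \eqref{eq:Psi-even}--\eqref{eq:Psi-odd} correctly verified.
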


We continue to define, for $m\ge 1$,
\begin{align}
\widehat{\Psi}_m:=\dfrac{1}{G(q^3)}\sum_{n\ge 0}
\left(ps_3\big(3^{2m+1}n+\tfrac{3^{2m+2}-1}{4}\big)
-ps_3\big(3^{2m-1}n+\tfrac{3^{2m}-1}{4}\big)\right)q^n.
\end{align}
That is,
\begin{align}\label{eq:Psi-hat-exp-2}
\widehat{\Psi}_m = \Psi_{2m+1}-\Psi_{2m-1}.
\end{align}
The following analogy is also clear.

\begin{theorem}
For any $m\ge 1$,
\begin{align}
\widehat{\Psi}_m = \sum_{k} C_m(k) \zeta^k \in \mathbb{Z}[\zeta],
\end{align}
where the coefficients $C_m(k)$ are \textbf{identical} to those given
in \eqref{eq:Psi-hat-coeff} for $\widehat{\Phi}_m$.
\end{theorem}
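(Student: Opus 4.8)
The plan is to mirror the entire development of Section~\ref{sec:ph-cong-proof} verbatim, exploiting the structural identities established for the $\zeta$-side. The key observation is that all the manipulations leading to $\widehat{\Phi}_m = \sum_k C_m(k)\xi^k$ relied only on three ingredients: the polynomial identities $\cX_1=\cX_1(\xi)$, $\cX_2=\cX_2(\xi)$, $\cX_3=\cX_3(\xi)$ from Lemma~\ref{le:xi-1-3}; the recurrence \eqref{eq:cX-rec} (equivalently the Newton-identity bootstrap); and the operator relations $\Phi_{2m}=U(\Phi_{2m-1})$, $\Phi_{2m+1}=U(\gamma\Phi_{2m})$ together with $U(\gamma\xi^i)=\xi^{-1}\cX_{i+1}$. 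By the previous theorem, $\cZ_i=\sum_j X_i(j)\zeta^j$ with the \emph{same} integers $X_i(j)$, and $U(\delta\zeta^i)=\zeta^{-1}\cZ_{i+1}$; moreover \eqref{eq:Psi-even}, \eqref{eq:Psi-odd} are the exact analogues of \eqref{eq:Phi-even}, \eqref{eq:Phi-odd}. Hence every symbolic step carries over with $\xi\mapsto\zeta$, $\gamma\mapsto\delta$, $\Phi\mapsto\Psi$, $\widehat{\Phi}\mapsto\widehat{\Psi}$, $F\mapsto G$.

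First I would record the base case: by Theorem~\ref{th:Psi-poly}, $\Psi_1=1-3\zeta+3\zeta^2$ and $\Psi_3$ equals the right-hand side of \eqref{eq:Phi3} with $\xi$ replaced by $\zeta$, so $\widehat{\Psi}_1=\Psi_3-\Psi_1=\sum_k C_1(k)\zeta^k$ with the $C_1(k)$ of \eqref{eq:Psi-hat-coeff}. Next I would set up the induction via $\widehat{\Psi}_{m+1}=U\big(\delta\, U(\widehat{\Psi}_m)\big)$, obtained by applying $U(\bullet)$ then $U(\delta\cdot\bullet)$ to \eqref{eq:Psi-hat-exp-2} and using \eqref{eq:Psi-even}, \eqref{eq:Psi-odd}. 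Expanding $U(\widehat{\Psi}_m)=C_m(0)+\sum_{k\ge 1}C_m(k)\cZ_k$ and then $\widehat{\Psi}_{m+1}=\sum_{\ell\ge 0}\widetilde{C}_m(\ell)\zeta^{-1}\cZ_{\ell+1}$ produces the identical coefficient bookkeeping as in the $\widehat{\Phi}$ proof, because the coefficients $X_i(j)$ appearing are literally the same and the valuation bounds of Lemma~\ref{le:3-adic-X} govern them. Therefore the intermediate quantities $\widetilde{C}_m(\ell)$ coincide with those on the $\xi$-side, and the resulting $C_{m+1}(k)$ agree term-by-term with the $\widehat{\Phi}_{m+1}$ coefficients.

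The only point requiring a separate argument is the analogue of \eqref{eq:Phi-hat-0-cong-2}: in the $\xi$-case the vanishing of the constant term of $\widehat{\Phi}_{m+1}$ came from $ph_3(0)-ph_3(0)=0$. Here the constant term of $G(q^3)\widehat{\Psi}_{m+1}(q)=\sum_n\big(ps_3(3^{2m+3}n+\tfrac{3^{2m+4}-1}{4})-ps_3(3^{2m+1}n+\tfrac{3^{2m+2}-1}{4})\big)q^n$ is $ps_3(\tfrac{3^{2m+4}-1}{4})-ps_3(\tfrac{3^{2m+2}-1}{4})$, which is \emph{not} obviously zero. However, we do not actually need it to vanish: since $\widehat{\Psi}_m$ and $\widehat{\Phi}_m$ have identical coefficients by induction, the polynomial identity forces $\widetilde{C}_m(0)-2\widetilde{C}_m(1)\equiv 0\pmod{3^{m+3}}$ to hold already on the $\xi$-side, so it transfers automatically to the $\zeta$-side as a purely algebraic consequence; no arithmetic input about $ps_3$ at a single argument is required. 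In short, the whole theorem reduces to the remark that the two towers of auxiliary functions satisfy the same recursions with the same initial data, so their polynomial coefficients — and in particular the $3$-adic valuations controlling $\widehat{\Psi}_m$ — are forced to be equal. I would present this as: ``The proof is word-for-word the same as that of the corresponding theorem in Section~\ref{sec:ph-cong-proof}, with $\xi$, $\gamma$, $\Phi$, $\widehat{\Phi}$, $F$ replaced by $\zeta$, $\delta$, $\Psi$, $\widehat{\Psi}$, $G$ throughout; the only nontrivial facts used, namely \eqref{eq:Z-coeff}, \eqref{eq:U-delta-cZ}, \eqref{eq:Psi-even} and \eqref{eq:Psi-odd}, are precisely the $\zeta$-analogues of \eqref{eq:X-coeff}, \eqref{eq:U-gamma-cX}, \eqref{eq:Phi-even} and \eqref{eq:Phi-odd}, and the valuation estimates of Lemma~\ref{le:3-adic-X} involve exactly the same integers $X_i(j)$.'' The main (and essentially only) obstacle is making sure the bookkeeping genuinely depends on nothing beyond these four transferred identities, which a careful side-by-side reading confirms.
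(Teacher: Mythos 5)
Your proposal is correct, but it takes a longer road than necessary. The paper's (implicit) proof is a one-liner: by \eqref{eq:Psi-hat-exp-2} you have $\widehat{\Psi}_m=\Psi_{2m+1}-\Psi_{2m-1}$, and Theorem \ref{th:Psi-poly} already asserts that $\Psi_{2m+1}$ and $\Psi_{2m-1}$ have \emph{the same} polynomial coefficients as $\Phi_{2m+1}$ and $\Phi_{2m-1}$; subtracting gives $\widehat{\Psi}_m=\sum_k C_m(k)\zeta^k$ immediately, with no induction and no coefficient bookkeeping. You instead re-run the entire inductive machinery of Sect.~\ref{sec:ph-cong-proof} on the $\zeta$-side, which is valid but redundant once Theorem \ref{th:Psi-poly} is in hand. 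That said, your treatment of the one genuinely delicate point is exactly right and worth keeping in mind: the constant term of $\widehat{\Psi}_{m+1}$ is $ps_3\big(\tfrac{3^{2m+4}-1}{4}\big)-ps_3\big(\tfrac{3^{2m+2}-1}{4}\big)$, which is not visibly zero, so the step \eqref{eq:Phi-hat-0-cong-2} does \emph{not} transplant; your resolution---that the congruence $\widetilde{C}_m(0)-2\widetilde{C}_m(1)\equiv 0\pmod{3^{m+3}}$ is proved once and for all on the $\xi$-side and then transfers because the integers involved are literally the same---is precisely the observation the authors make in Remark \ref{rmk:ps-to-ph}, and it is the reason the paper derives the $\zeta$-side results from the $\xi$-side rather than proving them independently. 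In short: what your route buys is a self-contained verification that the transfer really uses nothing beyond \eqref{eq:Z-coeff}, \eqref{eq:U-delta-cZ}, \eqref{eq:Psi-even} and \eqref{eq:Psi-odd}; what the paper's route buys is brevity, since the subtraction of two already-transferred polynomials requires no further argument.
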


Finally, invoking the $3$-adic evaluations for the coefficients $C_m(k)$
in \eqref{eq:nu-C-bound}, Theorem \ref{th:ps-cong} is immediately
established.

\begin{remark}\label{rmk:ps-to-ph}
Consulting the resemblance between the strategy for the proofs of
Theorems \ref{th:ph-cong} and \ref{th:ps-cong}, it is natural to ask if
we could proceed in the opposite direction by first presenting a direct
proof of the internal congruences for $ps_3(n)$. Clearly, the arguments
in Sect.~\ref{sec:ph-cong-proof} can be transplanted readily by working
instead on $\mathbb{Z}[\zeta]$. However, one issue occurs at the very
end. Namely, one crucial property for $\widehat{\Phi}_m$ we have
utilized is \eqref{eq:Phi-hat-0-cong-2}:
\begin{align*}
\widehat{\Phi}_{m}(0)=0.
\end{align*}
However, for $\widehat{\Psi}_m$, we only have
\begin{align*}
\widehat{\Psi}_{m}(0)=ps_3{\left(\frac{3^{2m+2}-1}{4}\right)}
-ps_3{\left(\frac{3^{2m}-1}{4}\right)},
\end{align*}
the value of which is indefinite. This means that we cannot have an
immediate conclusion of a congruence parallel to
\eqref{eq:Phi-hat-0-cong-1&2}. Herein, we leave a question to the
interested reader.
	
\begin{problem}
Find a \textbf{direct} proof of the following congruence:
\begin{align}
ps_3{\left(\frac{3^{2m}-1}{4}\right)}\equiv
ps_3{\left(\frac{3^{2m+2}-1}{4}\right)}\pmod{3^{m+2}},
\end{align}
whenever $m\geq1$.
\end{problem}
\end{remark}

\section*{Declarations}

\subsection*{Competing interests}

The authors declare that they have no known competing financial
interests or personal relationships that could have appeared to
influence the work reported in this paper.

\section*{Acknowledgements}

Dazhao Tang was partially supported by the National Natural Science
Foundation of China (No.~12201093), the Natural Science Foundation
Project of Chongqing CSTB (No.~CSTB2022NSCQ--MSX0387), the Science
and Technology Research Program of Chongqing Municipal Education
Commission (No.~KJQN202200509) and the Doctoral start-up research
Foundation (No.~21XLB038) of Chongqing Normal University.

\bibliographystyle{amsplain}

\end{document}